\theoremstyle{plain}
\newtheorem{theorem}{Theorem}[section]
\newtheorem{lemma}[theorem]{Lemma}
\newtheorem*{theorem*}{Theorem}
\newtheorem*{lemma*}{Lemma}
\newtheorem*{corollary*}{Corollary}
\newtheorem*{statement*}{Statement}
\newtheorem*{proposition*}{Proposition}
\newtheorem{remark*}[theorem]{Remark}
\theoremstyle{definition}
\newtheorem{definition}{Definition}    
\newtheorem{example}{Example}     
\newtheorem*{definition*}{Definition}
\newtheorem*{conjecture*}{Conjecture}
\newtheorem*{example*}{Example}
\theoremstyle{remark}
\newtheorem*{note*}{Note} 
\newtheorem*{case*}{Case}
\newcounter{fig}
\renewcommand{\figure}{\refstepcounter{fig}%
                  Fig. \arabic{fig}. }
\begin{document}

\title{Grationality, with a Spoon}

\author{L. Jeneva Clark}

\address{Department of Mathematics, University of Tennessee,
Knoxville, Tennessee 37996-1320, {\tt E-mail: dr.jenevaclark@utk.edu}}

\begin{abstract} 
 The introduction of \textit{Grationality} at a 2025 sectional meeting of the Mathematical Association of America~\cite{MAAtalk} installed a handle on a concept akin to rationality of numbers, but in a geometric context. An \textit{nice} $n$-gon was defined to be a regular $n$-gon with side lengths that are natural numbers, and a number $n$ was defined to be \textit{grational} if and only if there exists a nice $n$-gon such that its area equals the sum of areas of $n$ congruent nice $n$-gons. This paper shows several examples of grational and non-grational numbers, followed by theorems about how the grationality of a number relates to its divisibility. Proofs of these theorems do not use high-powered tools, but rely on geometric constructions, proportional reasoning, tiling, dissection, the Carpets Theorem, and proof by descent. In keeping with this simplicity, a.k.a. ``doing math with a spoon," images are heavily leveraged. The benefits of choosing simplistic tools are discussed.\end{abstract}

\maketitle

\section{Introduction} A wise egg once said, {“When I use a word, it means just what I choose it to mean — neither more nor less,”}~\cite{carroll1875through}. Humpty Dumpty knew that  providing a \textit{tabula rasa} for new ideas can help open minds, impelling unbiased clarity in mathematical communication and logic. \textit{Grationality}, a new word introduced at a 2025 Sectional Meeting of the Mathematical Association of America~\cite{MAAtalk}, helps approximate something akin to rationality of numbers, but based in a geometric context. In this paper, I attempt to use the stealth of geometry to sneak up on number theory, such as investigating the grationality of perfect-square integers. In doing so, I also radically advocate for ``doing math with a spoon," the art of using the simplest mathematical tools possible. This radically minimalist approach affords more opportunities for tangential discoveries, allows for broader participation, connects current ideas to math history, and leans into simplicity as one of the pillars of proof elegance.

%We define a \textit{nice} $n$-gon to be a regular $n$-gon with side lengths that are natural numbers. Also, we define that a number $n$ is \textit{grational} if and only if there exists a nice $n$-gon such that its area equals the sum of areas of $n$ congruent nice $n$-gons. 

\section{Spoons and Carpets}

\subsection{Some Mathematical Background}
In the 1950s, Stanley Tennenbaum, a student of John H. Conway, executed a strinkingly elegant proof of the irrationality of the square root of two~\citep{conway2006power}, which will be described in Section~\ref{Tennenbaum}, and some have worked on generalizing its technique~\citep{Miller01042012}. Building on this work, I wanted to move the goalpost, tackling \textit{grationality} instead of rationality.  The word \textit{grational} is a loose blend of the words geometry and rational, hinting at a number-theoretical concept clothed in the intuition of geometry. By creating a few new words, I hoped to push forward mathematical thought with improved clarity, with a byproduct of empowering math students to similarly take ownership of mathematical ideas by coining new words. To this end, I define the following terms, within the context of Euclidean geometry: 
\begin{enumerate}
    \item A nice $n$-gon is a regular $n$-gon with side lengths that are natural numbers. 
    \item An integer $n$ is grational if and only if there exists a nice $n$-gon such that its area equals the sum of areas of $n$ congruent nice $n$-gons.
    \end{enumerate}

\subsection{A Connection to Mathematics History}

The accessibility and simplicity of the Tennenbaum proof offers broad inspiration to mathematics education, as it can be understood by very young mathematics learners. It hinges only on the well-ordering axiom of positive integers and on the Carpets Theorem, which will be stated as Theorem~\ref{thm:carpets}. Some suspect this simple strategy was connected to the early discovery of irrational numbers because of Plato's writings, which reveal that a young person discovered irrationality with rudimentary mathematical tools~\citep{conway2013extreme}. According to Plato, the requisite knowledge could be taught to young students in just one lesson and rationality/irrationality had been determined for square roots of integers from 3 to 17. 

Math history is a long mathematical conversation between humanity and itself. I believe that intentional engagement with past mathematics rewards us with the fitness required (both mathematical strength and flexibility) to enjoy healthy strides into future mathematics. For this reason, I dust off this question: Why might have the case-by-case journey of early mathematicians been limited to the interval $[3, 17]$? One popular conjecture bears the namesake of Theodorus, which is the pseudonym Plato gave the mathematician in his writings. Despite its popularity, the Spiral of Theodorus, as shown in Figure~\ref{fig:.4}, fails to explain why early mathematicians would have begun with 3; it begins with one. What if rationality were adjusted to a more geometric interpretation? After all, the smallest polygon has 3 sides. Might we envision a closer guess of what mathematical reasoning might have looked like a few thousand years ago?

\begin{center}\includegraphics[width=.6\textwidth]{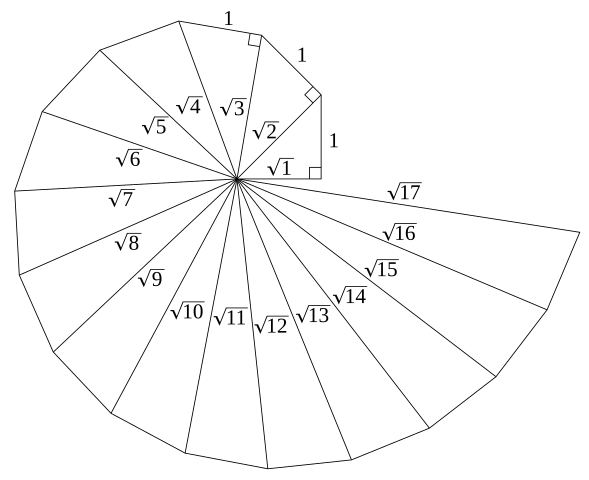}\\
      \figure The Spiral of Theodorus.
        \label{fig:.4}
  \end{center}

\subsection{Doing Math with a Spoon}
In this section, from the perspective of a mathematics educator, I strive to use simple tools of logic, rather than modern or high-powered techniques. This simplicity is like choosing to dig a swimming pool with a spoon, rather than with a shovel or a tractor. At first, this might sound foolish, but I am viewing mathematics through a lens of mathematics education. The heart of math itself beats only in the logic of the human mind. Thus, surprising simplicity both pleases aesthetic senses and ignites learning of mathematics, highly valued impacts in math education.

\begin{center}\includegraphics[width=\textwidth]{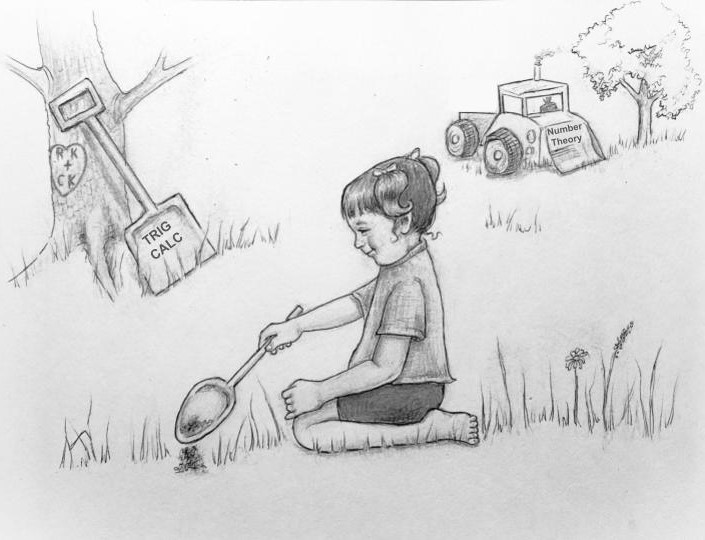}\\
      \figure A 3-year-old using a simple tool, drawing by Courtenay Kimberling
        \label{fig:.5}
  \end{center}

  Figure~\ref{fig:.5} depicts one of my earliest endeavors -- digging a swimming pool with a spoon at age 3. When I asked him for a swimming pool, my father, who often used tools like shovels and jackhammers around our estate, told me, ``Sure! You can dig a hole with anything!" He handed me his cereal spoon and instantly recruited his littlest construction worker. The smallness of a spoon granted dirt-moving power to even the smallest of hands. My control and autonomy over the earth afforded me more discovery. What cool stuff I found in the dirt! 
  
  For these reasons, autonomy, accessibility, and discovery, instead of doing mathematics that asks how big I can build a bulldozer, I favor the question, \textit{How small can I make my spoon?} In this paper, I stubbornly rely on spoon-level mathematical techniques -- geometric constructions, proportional reasoning, tiling, dissection, proof by descent, and only one additional theorem. The Carpets Theorem~\citep{andreescu2011mathematical} is low-hanging fruit for a young math learner. My 3-year-old self might have even been able to understand its statement and proof, as provided below, using both words and pictures to carry ideas.

\begin{theorem}[The Carpets Theorem]\label{thm:carpets}
Suppose that the floor of a room is completely covered by a collection of non-overlapping carpets such that the area of the room equals the combined areas of the carpets. If one were to move one or more of these carpets inside this room, then there may result region(s) where two carpets overlap each other and some region(s) that are not covered by any carpet. If so, the doubly carpeted area equals the uncarpeted area. 
  \end{theorem}

\begin{center}\includegraphics[width=.9\textwidth]{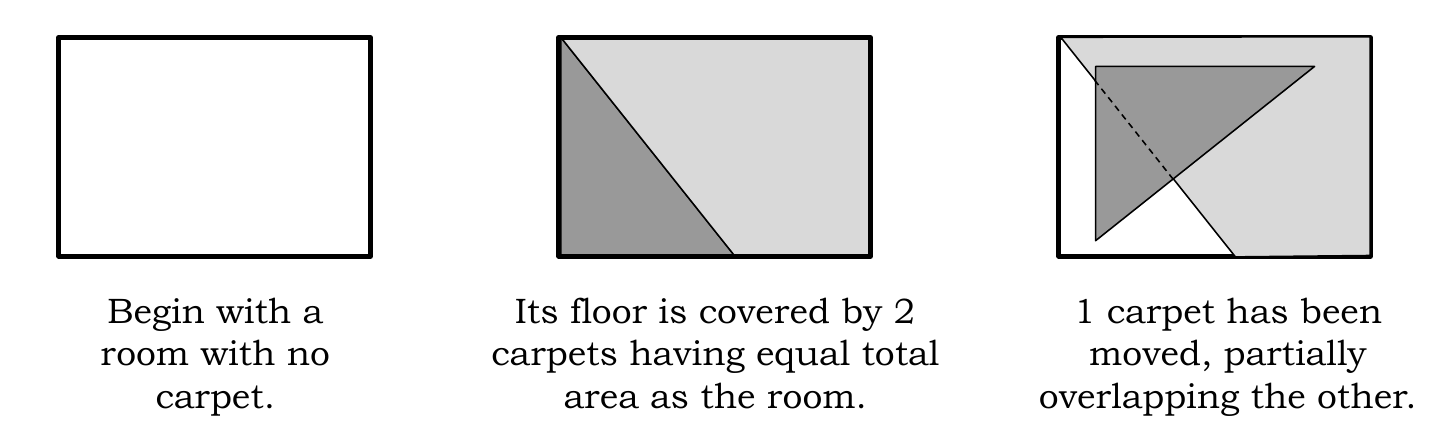}\\
      \figure An example of conditions where the Carpets Theorem holds.
              \label{fig:3}

  \end{center}

\begin{proof}
    Let a room be an arbitrary 2-dimensional shape with a non-empty interior. Let there be a collection of carpets inside the room such that the area sum of the carpets equals the area of the room. Suppose there is at least 1 region in which carpets overlap one another. Consider the different regions that result, as shown in Figure \ref{fig:5}. There are two different regions with the same triangular area, as shown in Figure \ref{fig:6}.   
    \begin{center}\includegraphics[width=.7\textwidth]{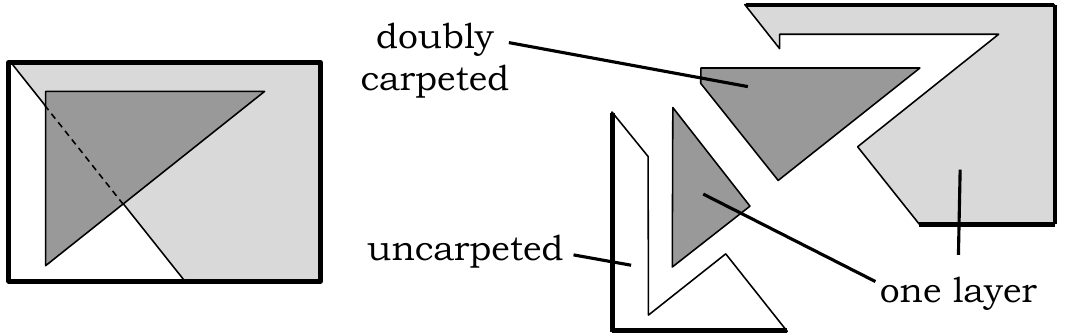}\\
      \figure Regions described by how many layers of carpet they have.
              \label{fig:5}
  \end{center}
  \begin{center}\includegraphics[width=.8\textwidth]{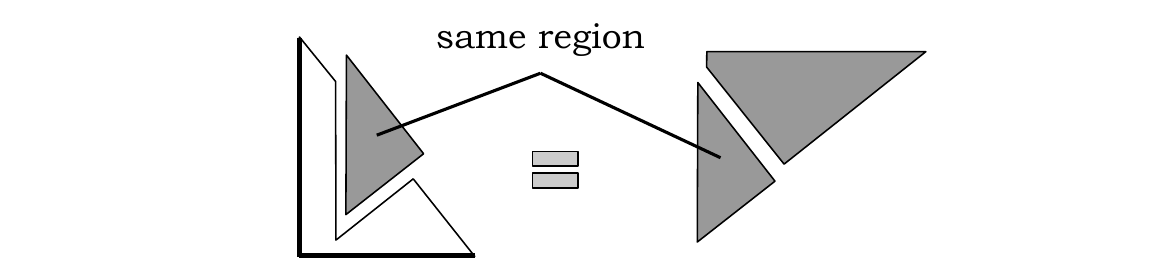}\\
      \figure Two regions are equal in area.
              \label{fig:6}
  \end{center}
 
In Figure \ref{fig:6}, each side of the ``equation" has a common term, implying the equality of the uncarpeted and doubly carpeted areas.
\begin{center}\includegraphics[width=.8\textwidth]{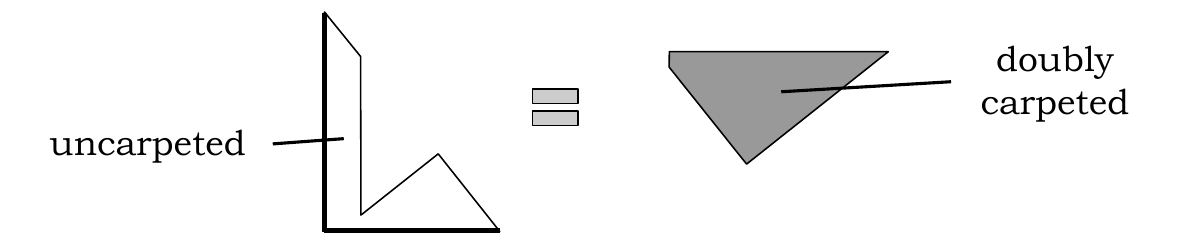}\\
      \figure The uncarpeted area equals the doubly carpeted area.
              \label{fig:7}
  \end{center}
  \end{proof}

 \subsection{Tennenbaum's Proof}\label{Tennenbaum}
  Tennenbaum began with the smallest case of an integer-by-integer square room and 2 integer-by-integer square carpets, such that the areas of the 2 carpets summed to room's area. He then put the 2 square carpets into opposite corners of the room, as in Figure~\ref{fig:7a}.

\begin{center}\includegraphics[width=.45\textwidth]{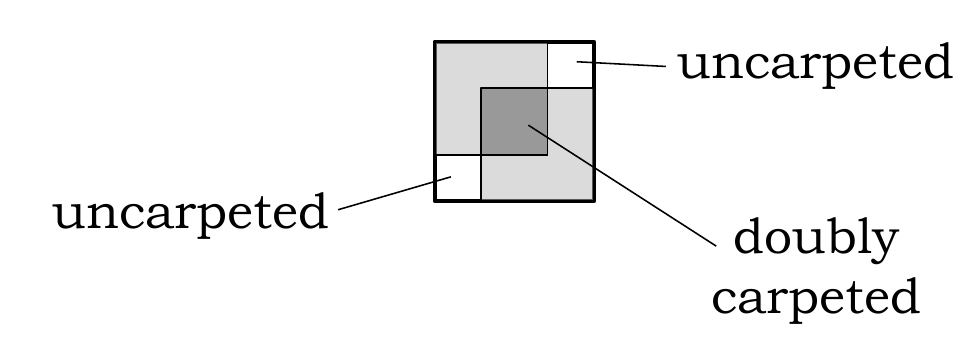}\\
      \figure Two square carpets in opposite corners of a square room.
              \label{fig:7a}
  \end{center}
By the Carpets Theorem, the uncarpeted area (2 squares) equals the doubly carpeted area (1 square). Because these squares have integer side lengths, this ``smallest" case is not the smallest, leading to a contradiction, implying $\sqrt{2}$ is irrational. 

Others have suggested generalizing Tennenbaum's proof to more polygons~\cite{Miller01042012, conway2006power}, but in this paper, I argue that viewing this as a base case for analogy is slightly amiss. Tennenbaum used 4-gon shapes with 2 carpets, but pairing 4 with 2 seemed a mismatch when the natural generalization path leads us to $n$-gon shapes with $n$ carpets, suggested by Conway~\citep{conway2006power} and further explored by Miller and Montague~\citep{Miller01042012}.  Thus, my stray from this proof is intentional, in an attempt to facilitate generalizability and geometric characterizations. I see the Tennenbaum proof as a springboard rather than a ladder, leaping to a more generalizeable pattern. 
\subsection{Niceness and Grationality, With a Spoon}
\begin{definition}
    \label{def: nicegon}
In Euclidean geometry, a \textit{nice-gon} is a regular polygon with integer side lengths. A \textit{nice n-gon} is a nice-gon with $n$ sides.

\end{definition}
\begin{definition}
    \label{def: grational}
An integer $n$ is \textit{grational} if and only if there exists a nice $n$-gon such that its area equals the sum of areas of $n$ congruent nice $n$-gons. In other words, suppose you have 2 nice $n$-gons for some integer $n\ge3$, and the areas of these nice-gons are $A$ and $B$; the grationality of $n$ means $A=nB$.
\end{definition}
\begin{example}
    Four is {grational}.\label{ex: 4}
\end{example}
\begin{proof}
A 2-by-2 square is a nice 4-gon with area 4, which equals the summed area of 4 1-by-1 squares, which are nice 4-gons. 

        \begin{center}\includegraphics[width=.7\textwidth]{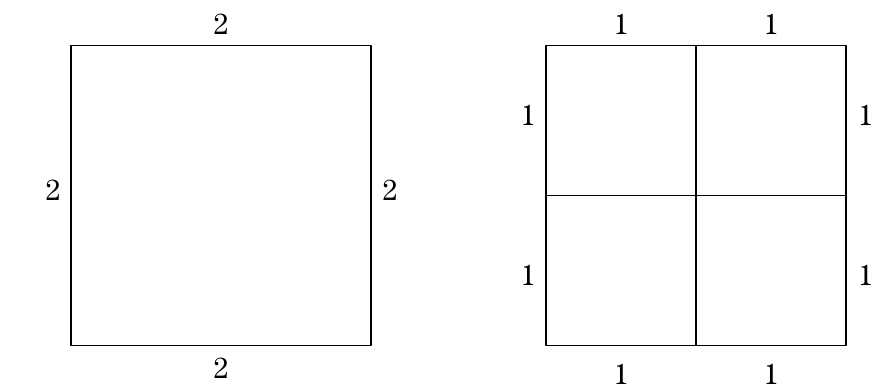}\\
      \figure A 2-by-2 square has the same area as 4 1-by-1 squares.
              \label{fig:2}
  \end{center}
\end{proof}

\noindent\textbf{Note.} Two is not grational. In Euclidean geometry, a 2-gon is not possible. Since Plato's narrative began with an $n=3$ case, this does not preclude grationality from being a candidate for the mysterious early work.
\begin{example}
    Three is not grational.\label{ex: 3}
\end{example}

\begin{proof}
    (Proved independently, but acknowledging similarity to a published proof of the irrationality of $\sqrt{3}$ found in~\cite{Miller01042012}.) Suppose not. Suppose three is grational. Then there would exist 3 nice 3-gons (equilateral triangles with integer side lengths) whose areas sum to the area of 1 nice 3-gon. Let us think of the triangle of area \textit{A} as the room and the triangles of area \textit{B} as the carpets, as shown in Figure \ref{fig:8}.

\begin{center}\includegraphics[width=.9\textwidth]{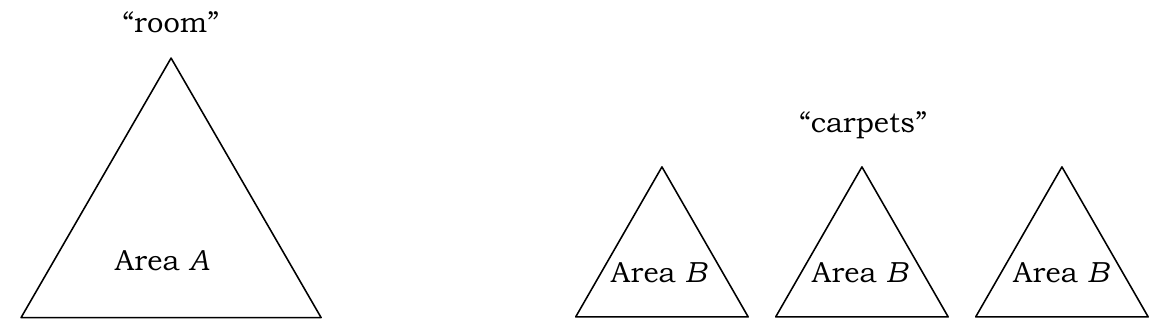}\\
      \figure One nice 3-gon has area $A=3B$, where $B$ is the area of another nice 3-gon. \label{fig:8}
  \end{center}
  
The side lengths are integers, and positive integers are bounded below. Thus, there must be a smallest possible case such that 1 nice 3-gon has triple the area of another. Suppose that $A$ is the smallest number such that one nice 3-gon has area $A=3B$, where $B$ is the area of another nice 3-gon. We shall call the integer side length of the larger triangle $a$ and the integer side length of the smaller triangles $b$.

    Overlay each of the 3 smaller triangular carpets onto the triangular room such that they share exactly one distinct vertex with the room and such that the interior of each carpet is contained in the interior of the room.
  \begin{center}\includegraphics[width=.3\textwidth]{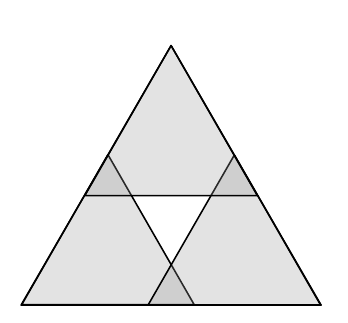}\\
      \figure The 3 smaller nice-gons (carpets) are overlaid onto the larger nice-gon (room).
              \label{fig:9}
  \end{center}
To prove that there would be 3 doubly carpeted areas, as shown in Figure~\ref{fig:9}, tiling of equilateral triangles can be used to show that $a<2b$, shown in Figure \ref{fig:10}.
  \begin{center}\includegraphics[width=.8\textwidth]{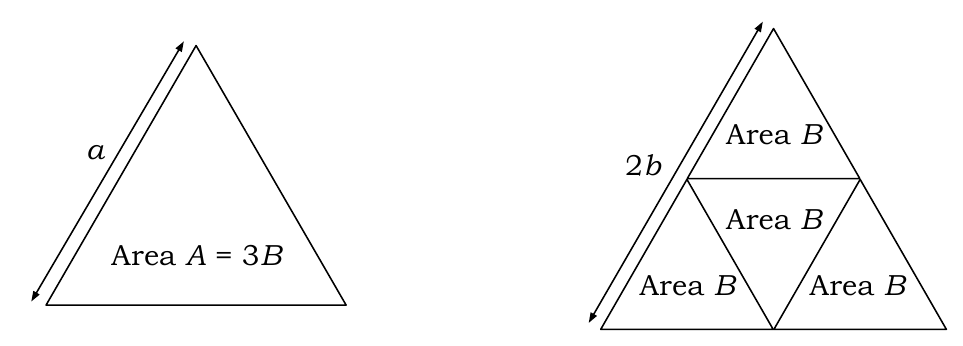}\\
      \figure A triangle of side length $2b$ has an area of $4B$, which is greater than $3B=A$.
              \label{fig:10}
  \end{center}
\noindent Because $A=3B<4B$, which is the area of a triangle with side length $2b$, we know $a<2b$. While this may seem obvious, I found value in verifying that these needed inequalities not only are true, but can be verified ``with a spoon," with the logic of a child. This rests on the intuitive principle that if two shapes are similar but have different sizes, then the one with the larger side length has the larger area. This fits in my spoon.

By applying the Carpets Theorem to Figure~\ref{fig:9}, the doubly carpeted areas (3 triangles) must equal the uncarpeted area (1 triangle). If these are nice 3-gons with the smaller ones being mutually congruent, then this would be a contradiction to our assumption that Figure \ref{fig:8} shows the smallest case.

Each doubly carpeted region is an equilateral triangle, as each shares two vertex angles with the carpets by which they were constructed. The doubly carpeted regions are congruent since each has a side length of $2b-a$, shown in Figure \ref{fig:11}. Because $a$ and $b$ are integers, $2b-a$ is also an integer, implying the doubly carpeted regions are nice-gons.
\begin{center}\includegraphics[width=.28\textwidth]{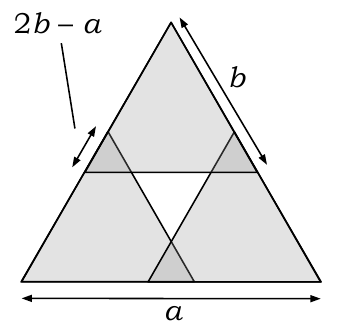}\\
      \figure The three doubly carpeted triangles have a side length of $2b-a$.
              \label{fig:11}
  \end{center}
\begin{center}\includegraphics[width=.4\textwidth]{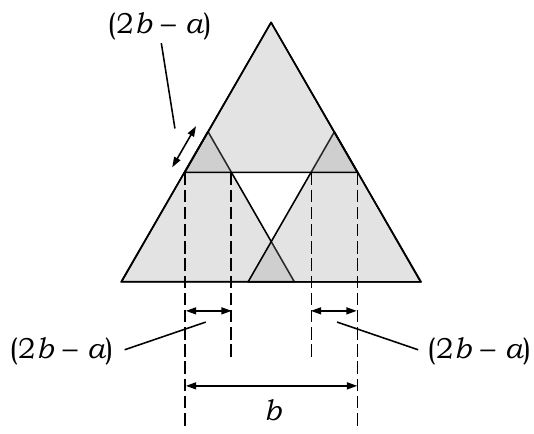}\\
      \figure The side-length of the uncarpeted triangle is $b-(2b-a)-(2b-a)$.
              \label{fig:12}
  \end{center}

The edge of the uncarpeted region, observable from Figure \ref{fig:12}, is $2a-3b$. A tiling technique similar to the one shown in Figure~\ref{fig:10} can show that an equilateral triangle of side length $2a$ has a larger area than an equilateral triangle of side length $3b$, which flows from the observation that $4A=4(3B)=12B>9B$. Because $2a > 3b$ and because $a$ and $b$ are integers, then $2a-3b$ is a positive integer, and the uncarpeted region is a nice 3-gon. We have shown a contradiction. Thus, three is not grational.
\end{proof}

\begin{example}
    Five is not grational.\label{ex: 5}
\end{example}
\begin{proof}
(Proved independently, but acknowledging similarity to a published proof of the irrationality of $\sqrt{5}$ found in~\cite{Miller01042012}.)
    Suppose not. Five is grational. There must exist five nice 5-gons (regular pentagons with integer side lengths) whose areas sum to the area of one nice 5-gon. Again, let \textit{A} be the area of the room and \textit{B} be the area of each carpet, each of which will be pentagonal with integer side lengths \textit{a} and \textit{b}, respectively, as shown in Figure \ref{fig:14}, and that this is the smallest case.
\begin{center}\includegraphics[width=.6\textwidth]{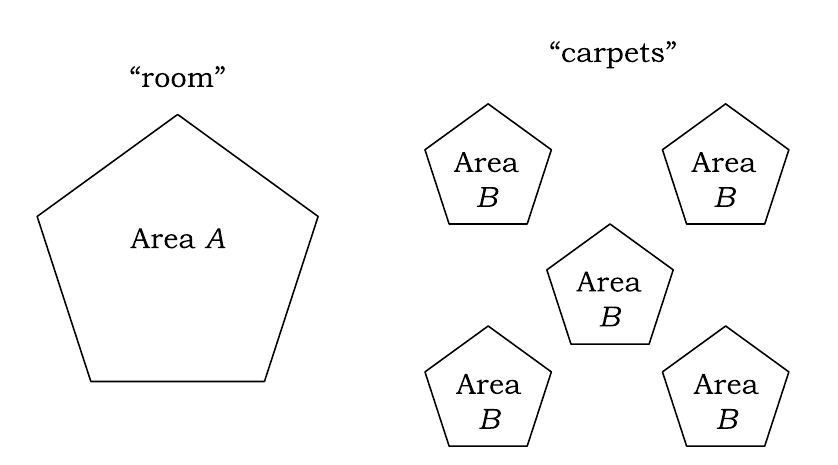}\\
      \figure A nice 5-gon has area $A=5B$, where $B$ is the area of another nice 5-gon.
              \label{fig:14}
  \end{center}

Overlay each of the pentagonal carpets onto the pentagonal room such that each shares exactly one distinct vertex with the room and such that the interior of each carpet is in the interior of the room, as shown in Figure \ref{fig:15}.
 \begin{center}\includegraphics[width=.3\textwidth]{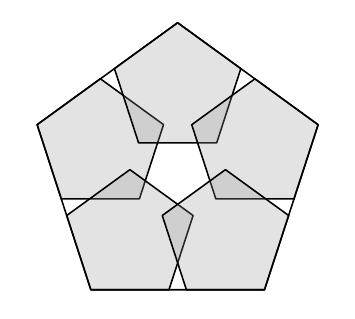}\\
      \figure The five smaller nice-gons (carpets) overlaid onto the larger nice-gon (room).
              \label{fig:15}
  \end{center}
To verify that there would indeed be doubly carpeted regions as they appear in Figure \ref{fig:15}, tiling arguments can be made, which compare areas of similar shapes. To provide one such example, how can we be certain that there are 5 triangular regions of uncarpeted floor? Consider Figure~\ref{fig:17}. The area of the left-hand pentagon, with side length $2b$, equals that of 20 tiled isosceles triangles, which is a smaller area than $A=5B$, which equals the area of 25 such isosceles triangles, shown on the right side of the image. This logic rests on the ideas that any regular $n$-gon can be partitioned into $n$ isosceles triangles, each with a central angle of $2\pi/n$, and that isosceles triangles can be tiled by smaller similar isosceles triangles, ideas which fit into my spoon.

\begin{center}\includegraphics[width=.7\textwidth]{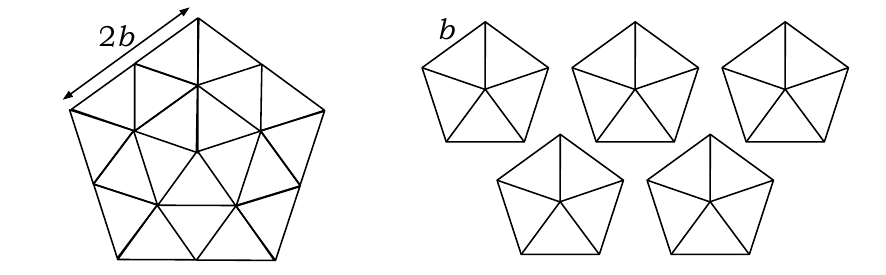}\\
      \figure If $a=2b$, then $A$ (20 triangles) would be less than $5B$ (25 triangles).
              \label{fig:17}
  \end{center}

The right side of Figure \ref{fig:18} shows the doubly carpeted and uncarpeted regions: 6 uncarpeted regions, 5 triangles and one pentagon, and 5 doubly carpeted quadrilaterals. Notice this did not result in $n$ doubly carpeted $n$-gonal regions and 1 $n$-gonal uncarpeted region, as resulted in the proof that 3 was not grational. 
\begin{center}\includegraphics[width=.6\textwidth]{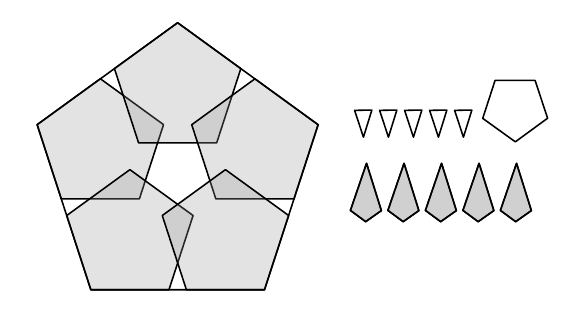}\\
      \figure Uncarpeted and doubly carpeted regions are shown on the right.
              \label{fig:18}
  \end{center}

By examining construction and using parallel lines, the small pentagon highlighted in Figure \ref{fig:20}(a) is regular. Symmetry and similar argumentation can show congruence of the 5 small pentagons shown in Figure \ref{fig:20}(b).

\begin{center}\includegraphics[width=.8\textwidth]{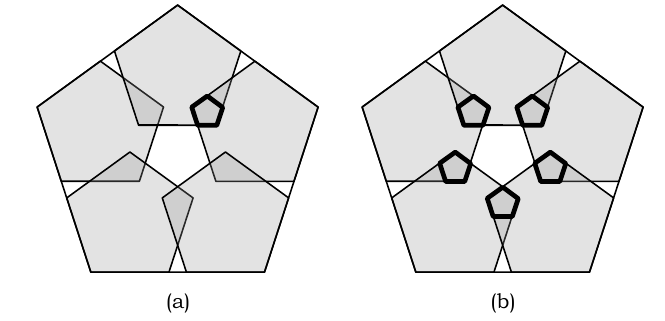}\\
      \figure (a) The bold pentagon is regular. (b) These five pentagons are congruent.
              \label{fig:20}
  \end{center}

If we were to cut and remove a piece of carpet from each doubly carpeted region, leaving those five congruent pentagons behind, would the cut carpet remnant perfectly fit in the uncarpeted triangular region, as shown in Figure~\ref{fig:21}? We need to determine if the newly created pentagons would have sides of length $a-2b$.
 \begin{center}\includegraphics[width=.7\textwidth]{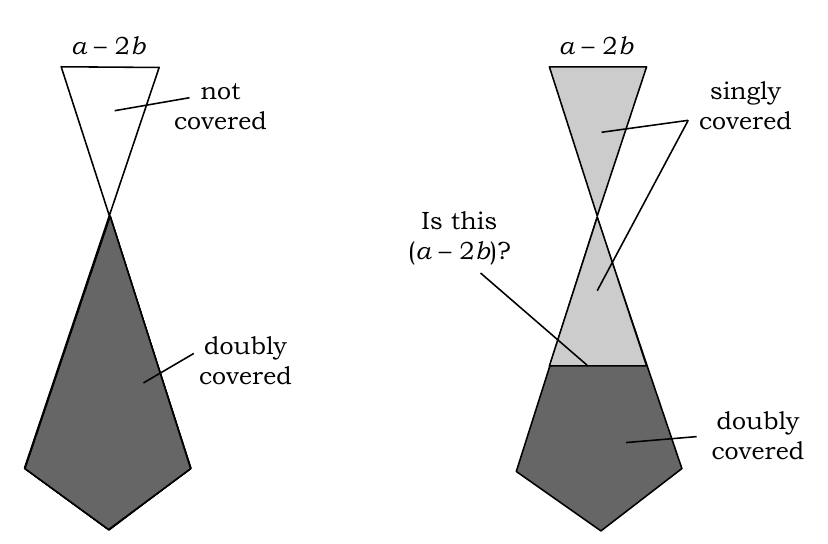}\\
      \figure 1 layer of each doubly carpeted region cut and moved.
              \label{fig:21}
  \end{center}

Let $d$ be the length of each carpet's diagonal, which connects any two non-adjacent vertices of the pentagon of side length $b$. Extend four sides of the room by a length of $b$ beyond the room's vertices, as shown in Figure \ref{fig:22}(a), and construction of parallel lines can lead us to an expression for $d$ in terms of $a$ and $b$, namely $(a+b)/2$.
 \begin{center}
      \includegraphics[width=.7\textwidth]{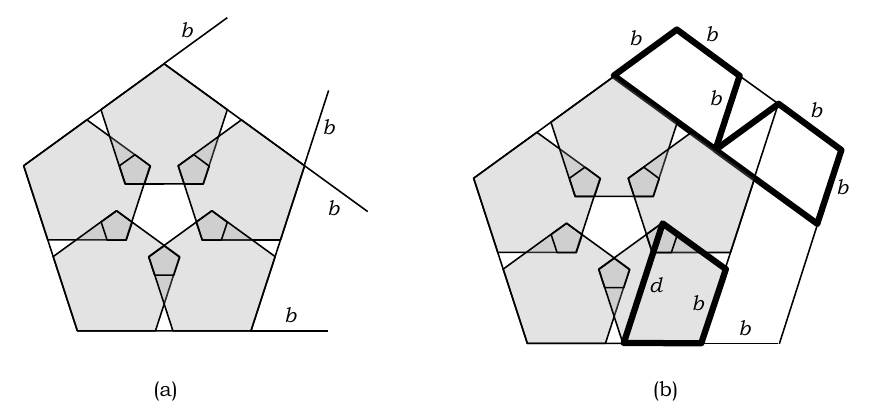}\\
      \figure (a) Extend sides of the room by a length of $b$. (b) Bold shapes are congruent. 
              \label{fig:22}
\end{center}

\noindent A side note: Arriving at the image in Figure~\ref{fig:22}b followed a bit of roleplay. While stubbornly resisting trigonometry or known facts about pentagons (shovels or wheelbarrows), I was able to fit this step into my spoon. To do this, I printed out the image of the pentagonal room and carpets, I picked up a ruler, and I pretended I was child who knew some basic facts about parallel lines. After drawing parallel lines all over the page, it was an orderly mess, and then I noticed the relationship needed.
 
\begin{center}\includegraphics[width=.3\textwidth]{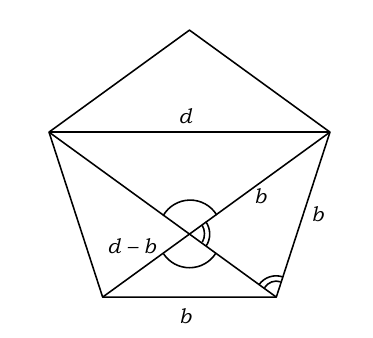}\\
      \figure The 3 drawn diagonals create similar isosceles triangles.
              \label{fig:24}
  \end{center}
  Similar isosceles triangles created by the pentagon's diagonals yield the proportion $d/b=b/(d-b)$. Rewriting and substitution yields $a^2=5b^2$. (Foreshadowing note: In Theorem~\ref{thm:quadratic}, I prove a more general version, $a^2=nb^2$, with a spoon.)

  It remains to determine if the small doubly carpeted pentagons have a side length equal to $a-2b$. The expression shown in Figure \ref{fig:25}(b) comes from subtracting expressions in Figure \ref{fig:25}(a). 

   \begin{center}
  \includegraphics[width=.8\textwidth]{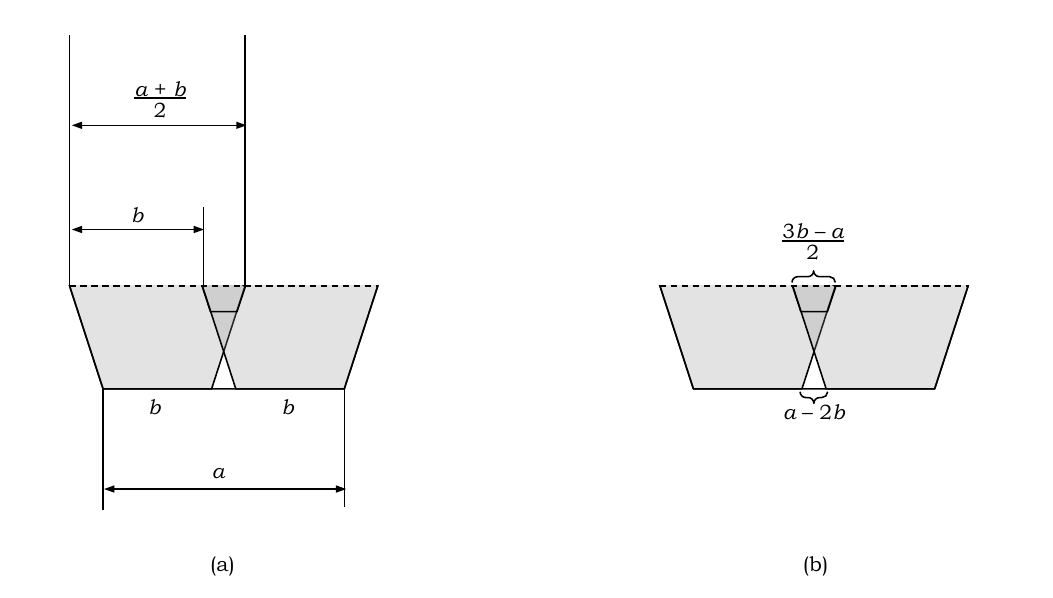}\\
      \figure (a) The bottom part of the room, truncated at the diagonals of 2 carpets. (b) The diagonal length of the small doubly carpeted pentagon is $(3b-a)/{2}$.
              \label{fig:25}

  \end{center}

Because the smallest pentagon is similar to the pentagon of side length $b$, a proportion gives us that the side length of the smallest pentagon shape is $(3b^2-ab)/(a+b)$, which can be rewritten as:
\[\frac{3b^2-ab}{a+b}\cdot \frac{a-b}{a-b}=\frac{b(4ab-3b^2-a^2)}{a^2-b^2}.\]

Because $a^2=5b^2$, we can substitute and arrive at the following.
\[x=\frac{b(4ab-8b^2)}{4b^2}=a-2b.\]

Therefore, the smallest pentagons indeed have a side length of $a-2b$, which will allow the dissection and repositioning of one layer from the doubly covered region, as shown in Figure \ref{fig:21}. Because we knew, by the Carpets Theorem, that the shaded regions in Figure \ref{fig:26}(a) have areas that sum to equal the sums of the unshaded regions, we now know that, in our new arrangement, that the doubly carpeted regions in Figure \ref{fig:26}(b) have an area sum equal to that of the uncarpeted pentagon.

\begin{center}\includegraphics[width=\textwidth]{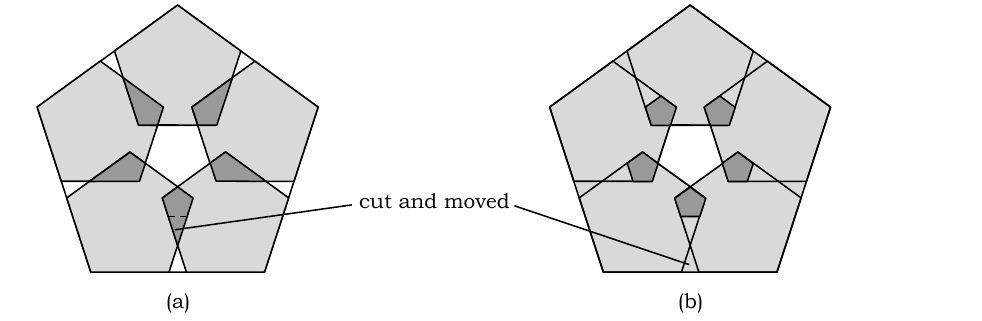}\\
      \figure If the doubly carpeted quadrilaterals in (a) are cut and rearranged to cover the uncarpeted triangles, then the room will look like (b).
              \label{fig:26}
  \end{center}

Because $a$ and $b$ are integers, $a-2b$ is an integer. By construction, the side length of the uncarpeted pentagon is $b-(a-2b)-(a-2b)$, which is $5b-2a$, also an integer. A tiling argument that compares the areas of similar isosceles triangles of base lengths $2a$ and $5b$ can verify that $5b-2a$ is positive, making the uncarpeted area a nice-gon. This is a contradiction to the assumption that we began with the smallest case. Thus, five is not grational.
\end{proof} 

The proof that 6 is not grational increases in complexity in a few ways. For brevity, I highlight the ways in which the proof is notably different from the preceding one. Overlaying six hexagonal carpets of equal summed area to that of their hexagonal room results in Figure \ref{fig:34}, with 7 uncarpeted regions and 6 doubly carpeted regions.

  \begin{center}
\includegraphics[width=.45\textwidth]{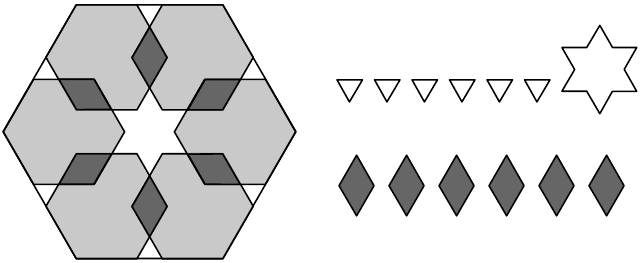}\\
      \figure 7 uncarpeted regions and 6 doubly carpeted regions result.
              \label{fig:34}
  \end{center}
The symmetries of hexagons and a simple dissection allow us to cut part of each carpet and move it to fit each uncarpeted triangle.
     \begin{center}
\includegraphics[width=.45\textwidth]{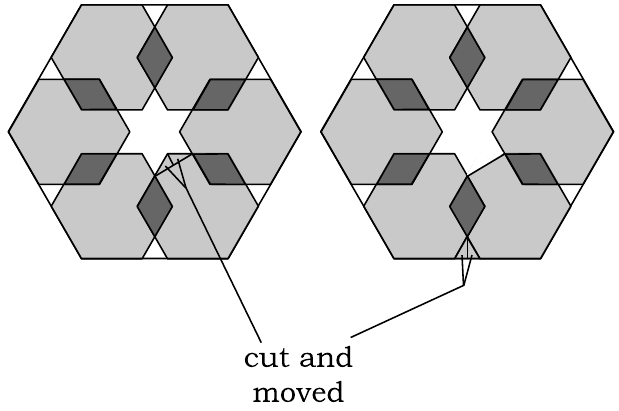}\\
      \figure A triangle of carpet moved to a uncarpeted region with the same area.
              \label{fig:38}
\end{center}
The proof that 6 is not grational required a dissection that showed each doubly carpeted quadrilateral rearranges into a regular hexagon, shown in Figure~\ref{fig:40}.

         \begin{center}
\includegraphics[width=.6\textwidth]{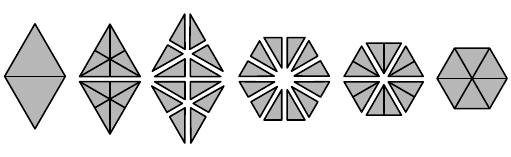}\\
      \figure This rhombus was dissected and rearranged to form a hexagon.
              \label{fig:40}
  \end{center}
With this dissection, the newly formed hexagon's side length, after a few similar-triangle observations, was found to be $(d/(3b))(3b-a)$. The proof then required showing $d$ to be divisible by $3b$ via hexagonal tiling and similar triangles.
  \begin{center}
\includegraphics[width=.8\textwidth]{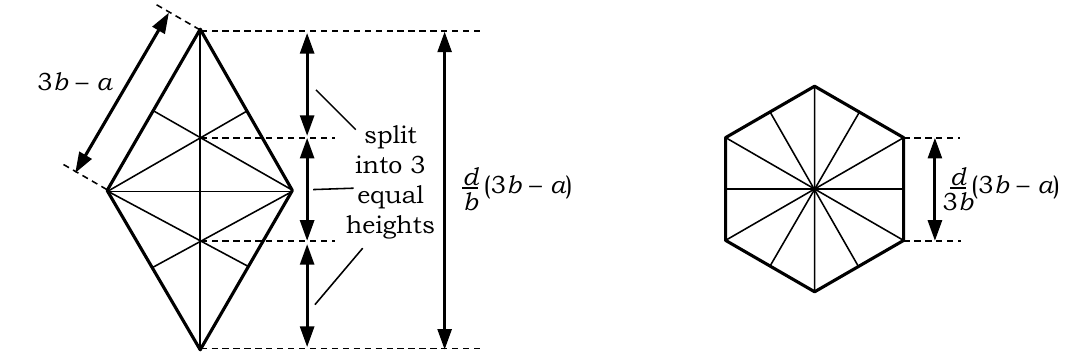}\\
      \figure The rhombus dissection splits its height into three equal sections.
              \label{fig:49}
  \end{center}
  Spoonful-by-spoonful, the proof led to a contradiction; 6 doubly carpeted nice 6-gons had an area sum equal to that of 1 uncarpeted nice 6-gon, as in Figure~\ref{fig:41}, which was smaller than the assumed smallest case.
      \begin{center}
\includegraphics[width=.5\textwidth]{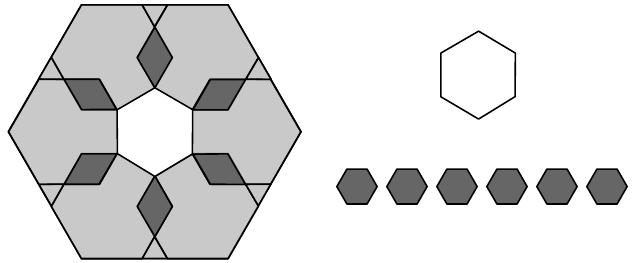}\\
      \figure One uncarpeted region and 6 doubly carpeted regions result.
              \label{fig:41}
  \end{center}
  In summary, the proof that six is not grational required a more complex dissection, which required additional similarity-derived proportions. It is noteworthy that the tile-ability of regular hexagons was leveraged more than once, a strategy not possible with a 7-gon. 
  
  Figure~\ref{fig:41.8} shows the cases for $n=7$ and $n=8$, appearing to increase in complexity and to require interesting dissections. In particular, the doubly carpeted areas in the $n=7$ case appear to be less symmetric. Thus, the need to conjecture a generalization is well-motivated. Contrastingly, nine can be shown to be grational, as shown in Example~\ref{ex: 9}.

    \begin{center}
\includegraphics[width=.5\textwidth]{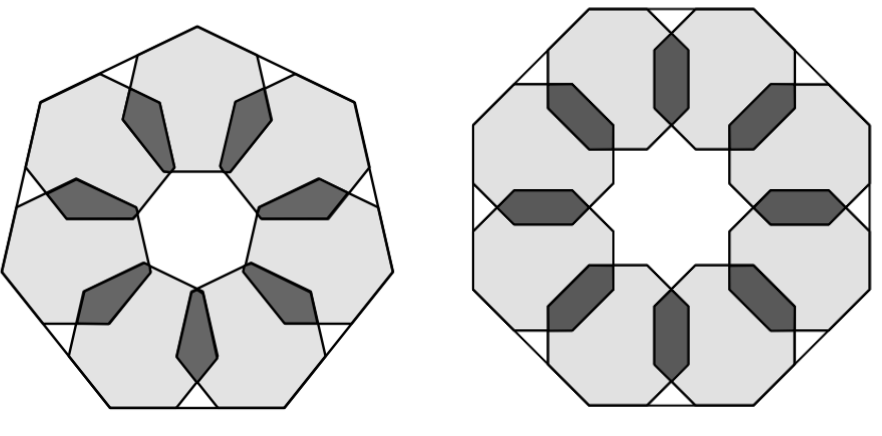}\\
      \figure The overlapping carpets for $n=7$ and $n=8$
              \label{fig:41.8}
  \end{center}
  
\begin{example}Nine is grational.\label{ex: 9}
\end{example}
\begin{proof}
Construct an isosceles triangle with angle measures $2\pi/9$, $7\pi/18$, and $7\pi/18$ with a shortest side length of one unit. Nine of these triangles can be arranged to form a regular nonagon of side length 1, which is a nice 9-gon. By triangular tiling, 81 of these triangles can be arranged to form a larger regular nonagon with side length 3, as shown on the left side of Figure \ref{fig:52}. This larger nice 9-gon has an area equal to the area sum of the 9 nice-gons shown on the right side of Figure~\ref{fig:52}. Thus, nine is grational.

    \begin{center}
\includegraphics[width=.8\textwidth]{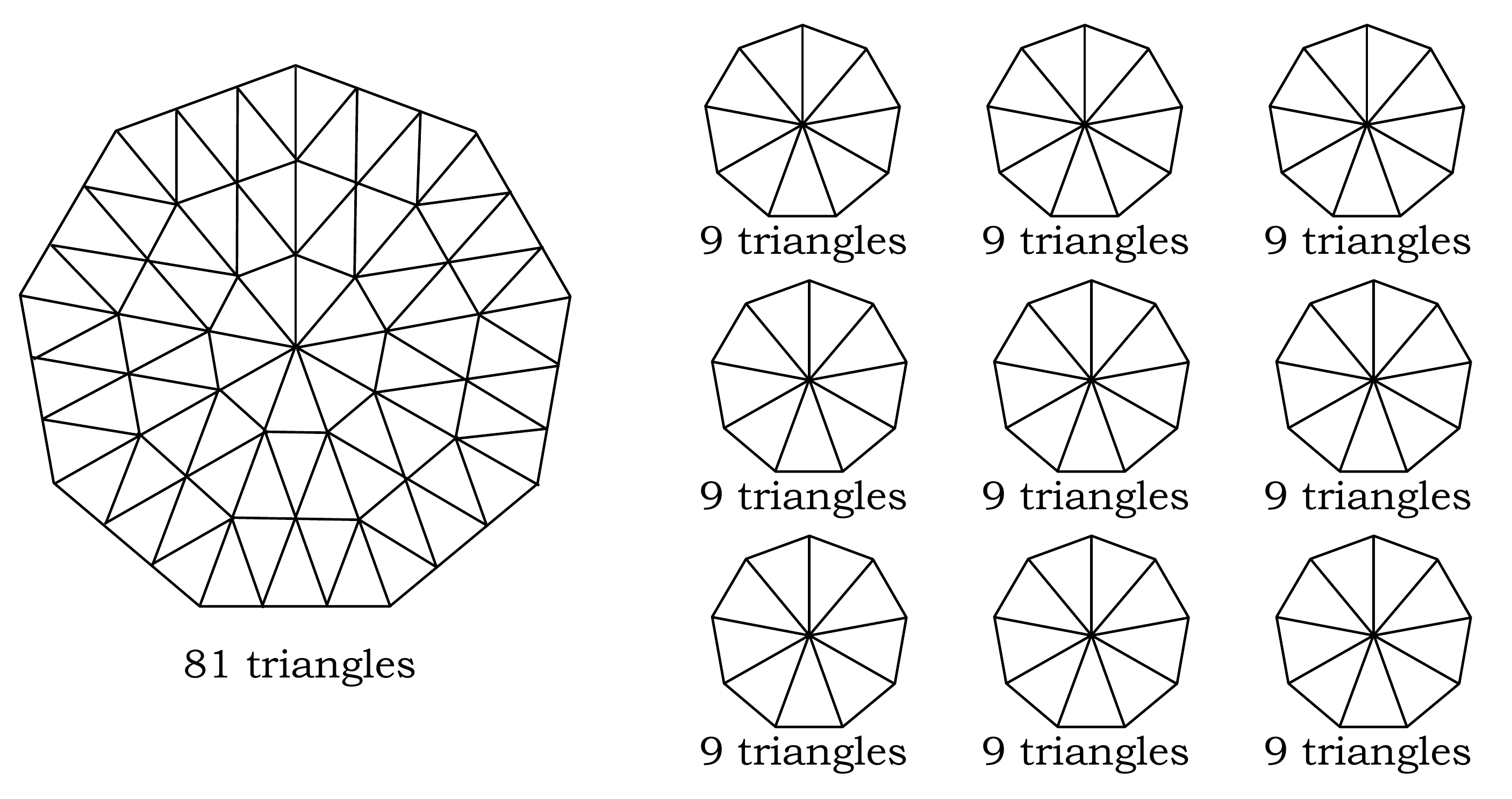}\\
      \figure The area of the nice-gon on the left equals the area sum of the 9 on the right.
              \label{fig:52}
  \end{center}
  
\end{proof}

The examples shown so far have set the scene for a generalization that most readers have already anticipated by noticing that perfect squares seem to be the grational ones. Even though it is well worthwhile to ask, \textit{Are all perfect squares grational?}, and, \textit{Are all grational numbers perfect squares?}, I find it more mathematically interesting to ask, \textit{Can I show both of these with only a spoon?}. I attempt to do this in Section~\ref{sub:3.7}

%The tiling used in the grationality of nine, in Example~\ref{ex: 9}, is an appropriate appetizer for Lemma~\ref{lem:J1}. Noticing that perfect square numbers seem to be grational and the intuition embedded in tileability led me to proof strategies for Theorems~\ref{thm:squaresgrat},~\ref{thm:quadratic}, and~\ref{thm:hard}, proven with a spoon in Section~\ref{sub:3.7}.

\subsection{Generalizing, with a Spoon}\label{sub:3.7}

In this section, I offer one lemma and three theorems that effectively show integers to be grational if and only if they are perfect squares. While this was neither difficult to notice nor prove, the challenge here was to prove it using only rudimentary logic and tools, in a Tennenbaum-esque manner. In keeping with my stubborn streak for simplicity, these are somewhat visual proofs, as convincing images I consider to be spoon-level mathematics. 

\begin{lemma}\label{lem:J1} If $k$ is an integer, then $k^2$ congruent isosceles triangles can be arranged to make one larger isosceles triangle that has (a) the same angle measures and (b) a base that is $k$ times the base of the smaller triangles.
\end{lemma}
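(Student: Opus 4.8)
The plan is to prove the equivalent statement that a single isosceles triangle of base $kb$ can be \emph{dissected} into $k^2$ congruent isosceles triangles of base $b$, since reversing the dissection is exactly the desired arrangement. In keeping with the spoon-level spirit, I would build everything from parallel lines together with the elementary similarity fact already used repeatedly in the examples: a segment drawn parallel to one side of a triangle cuts off a smaller triangle similar to the original, and equally spaced parallels cut off equal pieces. No trigonometry or area formulas are required.

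First I would take the large isosceles triangle $T$ with base $kb$, apex angle $\theta$, and base angles $\phi,\phi$, and mark the points dividing each of its three sides into $k$ equal parts. Through these $3(k-1)$ points I would draw every segment parallel to a side of $T$, producing a triangular grid. Counting rows downward from the apex, the $j$-th row contains $2j-1$ small triangles, so the grid holds $1+3+\cdots+(2k-1)=k^2$ pieces, matching the target count exactly.

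Next I would show that all $k^2$ pieces are congruent to one another and similar to $T$, which yields (a) and (b) simultaneously. Each upward-pointing piece is bounded by three segments parallel to the three sides of $T$, so by the parallel-line/AA criterion it is similar to $T$ with ratio $1/k$; because the division points are equally spaced, every such piece has base exactly $b=\tfrac1k(kb)$ and angles $\theta,\phi,\phi$, hence is an isosceles triangle with the same angle measures as $T$. This gives both claims at once: the shared angle measures establish (a), and the base relation $kb=k\cdot b$ establishes (b).

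The hard part will be confirming that the downward-pointing pieces are genuinely \emph{congruent} to the upward ones, not merely similar to $T$. The spoon-level resolution is to pair each downward piece with the upward piece sharing its top edge: since the remaining two sides of each are parallel to sides of $T$, the two triangles together form a parallelogram whose diagonal is their shared edge. The point reflection through the midpoint of that diagonal is a rigid motion carrying one triangle onto the other, so the downward piece is congruent to an upward piece and therefore is also an isosceles triangle of base $b$ with angles $\theta,\phi,\phi$. Once this single observation is in hand, all $k^2$ pieces are congruent isosceles triangles of base $b$, and the only remaining work is the elementary bookkeeping of the row count, so nothing heavier than parallel lines and a point reflection is needed.
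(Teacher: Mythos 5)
Your proof is correct, but it runs in the opposite direction from the paper's and uses a different counting device, so it is worth comparing the two. The paper builds \emph{up}: it starts with the $k^2$ small triangles arranged as a $k\times k$ ``square'' array and peels off L-shaped gnomons to rearrange them into rows of $1, 3, 5, \dots, 2k-1$, so the identity $k^2 = 1+3+\cdots+(2k-1)$ comes for free from the picture and never needs to be invoked as algebra; the price is that the congruence and fit of the pieces (in particular that the reversed, downward-pointing triangles tile correctly and that the resulting big triangle is isosceles with the same angles) is essentially read off the figure. You instead work \emph{down}: you dissect the large triangle by the parallel grid through the $k$-fold division points of its sides, which makes properties (a) and (b) immediate from AA similarity, and your parallelogram/point-reflection argument supplies exactly the congruence of the downward-pointing pieces that the paper leaves implicit --- that is the most rigorous part of either proof. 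The trade-off is that you must either quote or prove the sum $1+3+\cdots+(2k-1)=k^2$ (or, alternatively, count pieces by noting each has area $\mathrm{Area}(T)/k^2$), and you need the one-line remark, which you correctly make, that reversing a dissection is an arrangement. Both arguments are spoon-level and both are valid; yours is tighter on congruence, the paper's is tighter on the square-number count.
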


\begin{proof} 
If $k$ is an integer, then $k^2$ items can be arranged into $k$ rows of $k$. Let those items be congruent isosceles triangles, as shown in Figure~\ref{fig:57.71}.
    \begin{center}
\includegraphics[width=.3\textwidth]{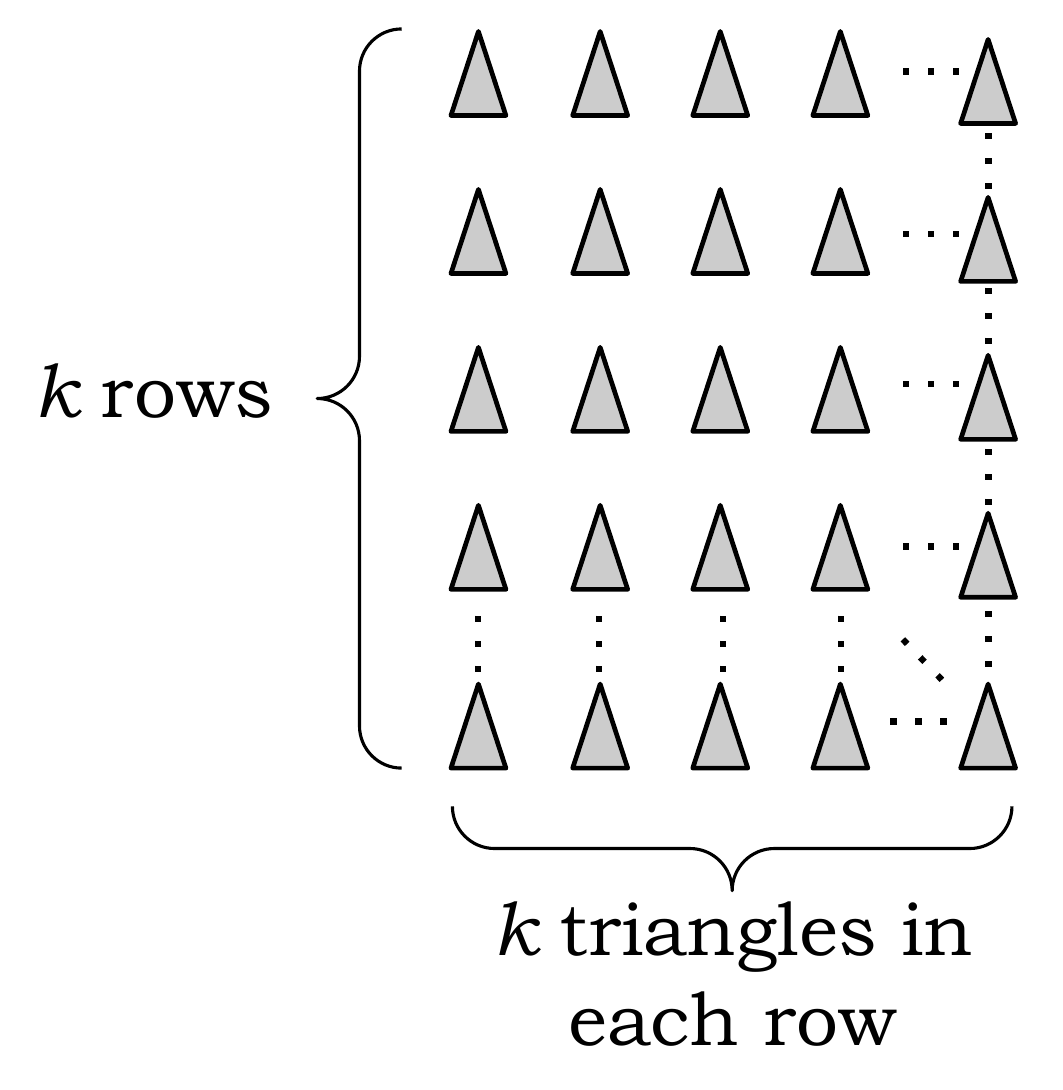}\\
      \figure A square of isosceles triangles arranged in $k$ rows of $k$.      \label{fig:57.71}
      \end{center}
This square of triangles can then be rearranged to form a triangle of tiled triangles, as shown in Figure~\ref{fig:57.8}. Each row after row 1 is formed from an L-shape of triangles that was adjacent to those used in the previous row. By this construction, row $n$ has $2n-1$ triangles in it for any integer $n$ in $[1,k]$.
         \begin{center}
\includegraphics[width=.8\textwidth]{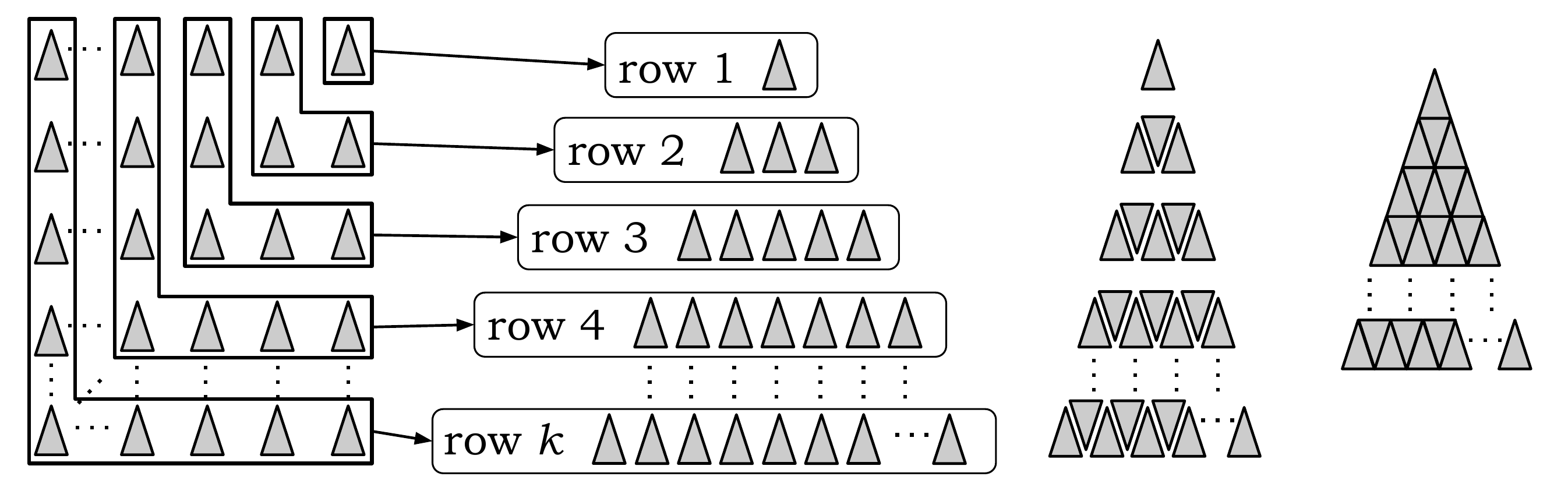}\\
      \figure A square of triangles can be rearranged to form a triangle of tiled triangles.        \label{fig:57.8}
      \end{center}

In each row, every second triangle is reversed in orientation, so that the triangles tile. The triangle resulting from gluing together all $k^2$ triangles is isosceles itself. In row $k$, $k$ of the small triangles are oriented with their bases along the bottom edge, which forms the base length of $k$ times the base of the smaller triangle.
\end{proof}
\begin{theorem}\label{thm:squaresgrat}
    If $n\ge2$ is an integer, then $n^2$ is grational.
\end{theorem}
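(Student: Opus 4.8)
The plan is to promote the construction of Example~\ref{ex: 9} from the special value $3$ to an arbitrary $n\ge 2$, so that the whole argument reduces to counting congruent isosceles triangles and applying Lemma~\ref{lem:J1}. Since grationality of the integer $n^2$ is a statement about nice $n^2$-gons, the target shape throughout is the regular $n^2$-gon. First I would fix the building block: let $T$ be the isosceles triangle with apex angle $2\pi/n^2$ and base length $1$, namely the ``wedge'' obtained by joining the center of a unit-sided regular $n^2$-gon to the two endpoints of one of its sides. Gluing $n^2$ copies of $T$ about their common apex closes up, because the apex angles sum to $n^2\cdot(2\pi/n^2)=2\pi$, and by the resulting $n^2$-fold rotational symmetry the outer boundary is a regular $n^2$-gon of side length $1$. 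This is a nice $n^2$-gon; call its area $B$, and record that $B=n^2\,[T]$, where $[T]$ is the area of a single wedge.

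Next I would enlarge the wedge. Applying Lemma~\ref{lem:J1} with $k=n$, I can assemble $n^2$ copies of $T$ into one larger isosceles triangle $T'$ having the same angle measures as $T$ and base length $n$. By part (a) of the lemma, $T'$ again has apex angle $2\pi/n^2$, so gluing $n^2$ copies of $T'$ about their common apex once more closes up at total angle $2\pi$ and, by the same symmetry argument, produces a regular $n^2$-gon of side length $n$. Because $n$ is an integer, this larger polygon is also a nice $n^2$-gon; call its area $A$.

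The area comparison is then pure bookkeeping, requiring no area formula. The large polygon is built from $n^2$ wedges $T'$, each of which is built from $n^2$ copies of $T$, so it consists of exactly $n^4$ congruent copies of $T$, giving $A=n^4\,[T]$. On the other side, $n^2$ congruent copies of the unit polygon together consist of $n^2\cdot n^2=n^4$ congruent copies of $T$ as well, with combined area $n^2 B=n^2\cdot n^2\,[T]=n^4\,[T]$. Hence $A=n^2 B$: the nice $n^2$-gon of side $n$ has area equal to the summed areas of $n^2$ congruent nice $n^2$-gons of side $1$, which is exactly the definition of $n^2$ being grational.

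I expect the only step needing genuine care — rather than routine counting — to be verifying that the $n^2$ enlarged wedges $T'$ truly assemble into a \emph{regular} $n^2$-gon, with equal sides and equal angles. This is precisely where Lemma~\ref{lem:J1} earns its keep: claim (a) guarantees that $T'$ inherits both the apex angle $2\pi/n^2$ and the base angles of $T$, so $n^2$ congruent wedges sharing a common apex with equal central angles yield a figure with $n^2$-fold rotational symmetry, forcing regularity; and the collinearity of the $n$ small bases along the base of $T'$ keeps each polygon side straight. The hypothesis $n\ge 2$ merely ensures $n^2\ge 4\ge 3$, so the resulting figure is a genuine polygon. Everything else is tiling and counting, which is solidly spoon-level.
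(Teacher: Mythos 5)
Your proposal is correct and follows essentially the same route as the paper's own proof: building unit isosceles wedges with central angle $2\pi/n^2$ and base $1$, gluing $n^2$ of them into a unit nice $n^2$-gon, invoking Lemma~\ref{lem:J1} with $k=n$ to enlarge each wedge to base $n$, and assembling those into the side-$n$ nice $n^2$-gon so that both sides count $n^4$ congruent wedges. Your added care about why the enlarged wedges close up into a regular polygon is a nice explicit touch that the paper leaves implicit, but it is the same construction.
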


\begin{proof}
Let $n\ge2$ be an integer. Construct $n^2$ isosceles triangles so that each one has both a central angle measuring $2\pi/(n^2)$ and a base length of 1. Arrange them adjacently with each $2\pi/(n^2)$-angle vertex concurrent, which becomes the center of a newly formed polygon, as shown in Figure~\ref{fig:53}. Gluing these together creates a nice $n^2$-gon with side length 1. Repeat this $n^2$ times to create $n^2$ carpets. These carpets have an area sum equal to that of $n^4$ constructed congruent isosceles triangles.

      \begin{center}
\includegraphics[width=.65\textwidth]{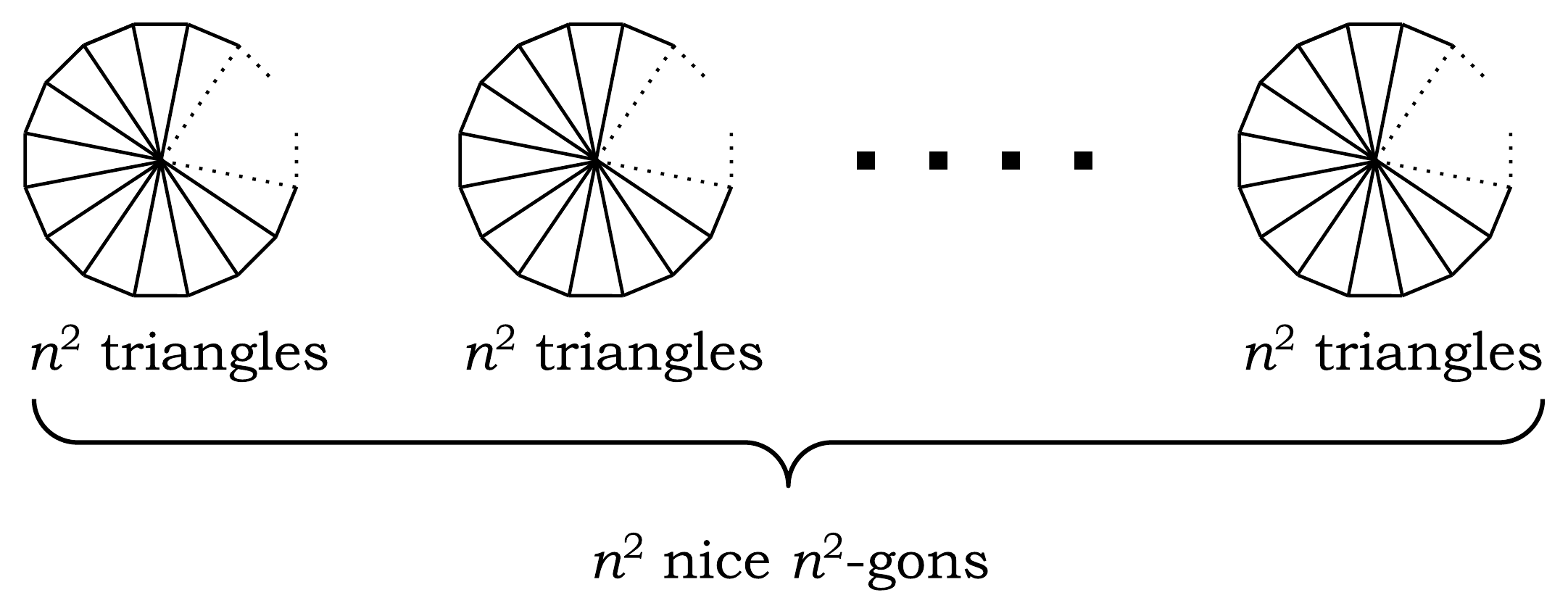}\\
      \figure An $n^2$ number of nice $n^2$-gons has an area sum of $n^4$.
              \label{fig:53}
  \end{center}  

To prove $n^2$ is grational, we need to construct a room that is a nice $n^2$-gon that also has a total area equal to the area sum of $n^4$ constructed congruent isosceles triangles of side length 1. First suppose we have triangles with both a central angle measuring $2\pi/(n^2)$ and a base length of 1. By Lemma~\ref{lem:J1}, $n^2$ of these triangles can be arranged to form a larger similar triangle whose base length is $n$. Repeat this arrangement $n^2$ times, forming $n^2$ isosceles triangles which can then be arranged into a nice $n^2$-gon with side length $n$, as shown in Figure \ref{fig:57}.

        \begin{center}
\includegraphics[width=.6\textwidth]{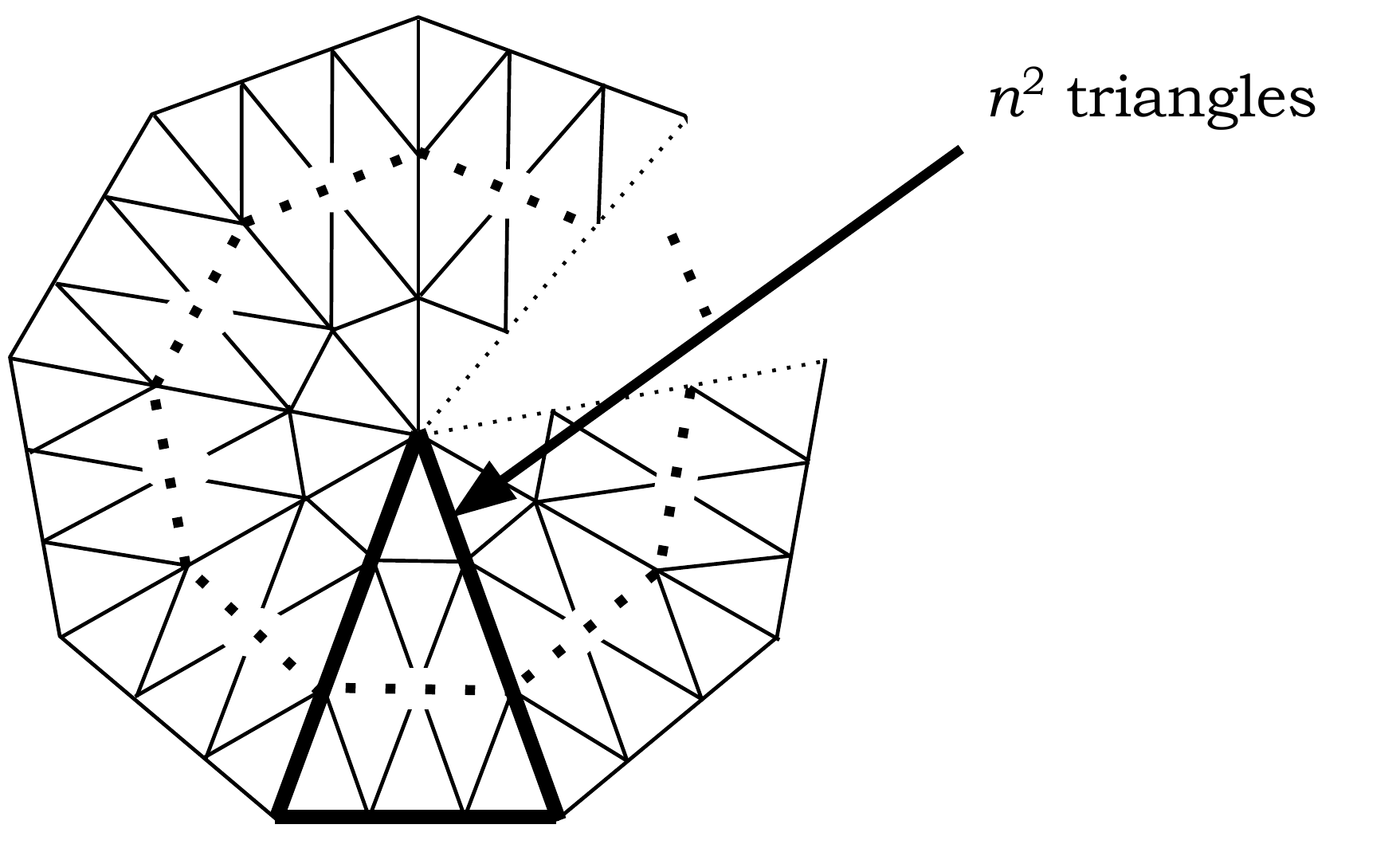}\\
      \figure A nice $n^2$-gon formed from $n^2$ triangles, each composed of $n^2$ triangles.         \label{fig:57}
      \end{center}

\end{proof}

\begin{theorem}\label{thm:quadratic} Suppose $n$ is grational. Grationality implies there exist 2 nice $n$-gons such that the area of one is $n$ times the area of the other. Let the larger nice-gon have side length $a$ and the smaller have side length $b$. Then, $nb^2=a^2$.
\end{theorem}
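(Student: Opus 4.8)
The plan is to reduce everything to the single geometric fact that the area of a regular $n$-gon is proportional to the square of its side length, and to extract that fact from Lemma~\ref{lem:J1} so as to stay strictly at the spoon level, using no trigonometry and no explicit area formula. The restated claim that grationality gives two nice $n$-gons with one area $n$ times the other is just the definition, so the only substantive content is the relation $nb^2=a^2$.

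First I would dissect each regular $n$-gon into $n$ mutually congruent isosceles triangles by drawing the $n$ segments joining the center to the vertices. By the rotational symmetry of a regular polygon these triangles are congruent, each has apex (central) angle $2\pi/n$, and each has its base equal to a side of the polygon. Applying this to both nice $n$-gons, the larger (side length $a$) is the union of $n$ isosceles triangles of base $a$, and the smaller (side length $b$) is the union of $n$ isosceles triangles of base $b$; moreover all of these triangles are similar, since they share the apex angle $2\pi/n$. Writing $T_a$ and $T_b$ for the areas of one large and one small triangle respectively, we get $A=n\,T_a$ and $B=n\,T_b$, so the polygon area ratio equals the triangle area ratio, $A/B=T_a/T_b$.

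The heart of the argument is to show $T_a/T_b=a^2/b^2$, and here is where I would lean on Lemma~\ref{lem:J1}. Since $a$ and $b$ are integers, I can scale both triangles to a common base: by the lemma, $a^2$ copies of the base-$b$ triangle tile one isosceles triangle of base $ab$, and likewise $b^2$ copies of the base-$a$ triangle tile one isosceles triangle of base $ab$. These two large triangles have the same base $ab$ and the same angle measures (both isosceles with apex angle $2\pi/n$), hence are congruent and equal in area. This yields $a^2\,T_b=b^2\,T_a$, that is, $T_a/T_b=a^2/b^2$. Combining with the previous step gives $A/B=a^2/b^2$.

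Finally, grationality means precisely that $A=nB$, so $A/B=n$, and therefore $n=a^2/b^2$, i.e. $nb^2=a^2$, as claimed. I expect the main obstacle to be exactly the area-scaling step: in most treatments the scaling law is handed to us by the formula $\tfrac{n}{4}\cot(\pi/n)\,s^2$, and the spoon-level challenge is to avoid that formula entirely. The common-base tiling trick built on Lemma~\ref{lem:J1} is what lets me do so, reducing the scaling law to nothing more than dissection and triangle congruence.
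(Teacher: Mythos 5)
Your proof is correct, but it establishes the key step by a genuinely different route than the paper. Both arguments begin identically, dissecting each nice $n$-gon into $n$ congruent central isosceles triangles with apex angle $2\pi/n$, so that the polygon area ratio reduces to the triangle area ratio $T_a/T_b$. The paper then invokes the formula $\text{area}=\tfrac{1}{2}\cdot\text{base}\cdot\text{height}$, writes $A=n\cdot\tfrac{1}{2}a h_a$ and $B=n\cdot\tfrac{1}{2}b h_b$, and plays two expressions for $h_a/h_b$ against each other: the grationality equation $A=nB$ forces $h_a/h_b=nb/a$, while similarity of the sub-triangles forces $h_a/h_b=a/b$, and equating gives $nb^2=a^2$. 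You instead avoid the area formula and the height proportion entirely: using Lemma~\ref{lem:J1} with the integers $a$ and $b$, you tile $a^2$ copies of the base-$b$ triangle and $b^2$ copies of the base-$a$ triangle into two isosceles triangles of common base $ab$ and identical angles, which are congruent by ASA, yielding $a^2 T_b=b^2 T_a$ and hence $T_a/T_b=a^2/b^2$ by pure dissection and congruence. What your version buys is a quadratic scaling law obtained without any mensuration formula, which is arguably closer to the paper's own ``spoon'' ethos, and it reuses a lemma the paper has already proved (and needs again for Theorem~\ref{thm:hard}); it does, however, genuinely depend on $a,b$ being integers, whereas the paper's argument would work for arbitrary real side lengths. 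The paper's version is shorter and, as a small aside, its opening appeal to a ``smallest case'' is never actually used --- your proof correctly dispenses with it.
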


\begin{proof}
Suppose $n$ is grational. Increasing the side length of a regular polygon increases its area, and positive integers are bounded below, so if $n$-gons have positive integer side lengths, there must be a smallest possible number such that one nice $n$-gon has $n$ times the area of another. Suppose that $A$ is the smallest number such that one nice $n$-gon has area $A=nB$, where $B$ is the area of another nice $n$-gon. Let the side length of the larger $n$-gon be $a$ and the side length of the smaller $n$-gon be $b$. Because these are nice-gons, $a$ and $b$ are positive integers.

Every regular polygon can be dissected into isosceles triangles by connecting the polygon's center to each vertex. Thus, the areas $A$ and $B$, where $h_a$ and $h_b$ are the heights of such sub-triangles, are as follows:
\[A=n\cdot \frac{1}{2}\cdot a\cdot h_a\text{ and }B=n\cdot \frac{1}{2}\cdot b\cdot h_b\]

By the assumption of grationality, $A=nB$, which gives
\[n\cdot \frac{1}{2}\cdot a\cdot h_a=n^2\cdot \frac{1}{2}\cdot b\cdot h_b.\] By dividing by non-zero quantities, we get the ratio $h_a/h_b$ as equivalent to $nb/a$. However, we can arrive at a ratio for $h_a/h_b$ in another way. All nice $n$-gons are similar, and so must be the sub-triangles that result from dissections connecting the center to each vertex. This similarity tells us that the ratio $h_a/h_b$ is also ${a}/{b}$.
Equating these two different expressions for $h_a/h_b$ yields $nb^2=a^2$, or equivalently, $a^2/b^2=n$.

\end{proof}

\begin{theorem}\label{thm:hard} Every grational number is a perfect square. In other words, if $n$ is grational, then $n=k^2$ for some integer $k$.
\end{theorem}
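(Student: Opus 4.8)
The plan is to lean entirely on Theorem~\ref{thm:quadratic}, which has already converted the geometry into arithmetic: if $n$ is grational then there are positive integers $a$ and $b$ with $a^2 = nb^2$. What remains is the purely number-theoretic fact that a positive integer whose square root is rational must itself be a perfect square. First I would record that $a^2 = nb^2$ says exactly $\sqrt{n} = a/b$, so the task reduces to showing this ratio is in fact an integer; once $a/b = k$ for some integer $k$, we immediately have $n = k^2$ and we are done.

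The engine will be proof by descent, chosen because it is the same spoon the ancients used and because it needs nothing beyond the well-ordering of the positive integers. Among all pairs of positive integers $(a,b)$ satisfying $a^2 = nb^2$, I would pick one with $b$ as small as possible. Suppose, for contradiction, that $n$ is not a perfect square; then $a/b$ cannot be an integer, so writing $m = \lfloor a/b \rfloor$ we have $m \ge 1$ (since $n \ge 3$ forces $a > b$) and $0 < a - mb < b$. This subtraction $b \mapsto a - mb$ is nothing more than one step of the subtractive Euclidean algorithm (anthyphairesis), a move entirely at home in a Theodorus-era toolkit, which keeps the argument in keeping with the paper's historical framing.

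Now I set $b' = a - mb$ and $a' = nb - ma$, and the heart of the argument is the identity $a'^2 = n b'^2$. Expanding both sides and repeatedly substituting $a^2 = nb^2$ collapses each to the common expression $n^2 b^2 - 2mn\,ab + m^2 n b^2$; I would present this as a short, verifiable computation rather than grind it out in full. It then remains to check three elementary facts: that $a'$ and $b'$ are integers (immediate, since $a,b,m,n$ are); that $0 < b' < b$ (this is exactly the choice of $m$); and that $a' > 0$ (because $m < a/b$ gives $ma < nb$). Thus $(a', b')$ is a strictly smaller solution of $x^2 = n y^2$, contradicting the minimality of $b$.

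The contradiction forces $a/b$ to be an integer after all, and hence $n = (a/b)^2$ is a perfect square, completing the proof. The step I expect to be the main obstacle is psychological rather than technical: resisting the temptation to invoke unique factorization (a bulldozer) and instead trusting the descent (a spoon). The only genuine care needed is in confirming the positivity of $a'$ and the strict inequality $b' < b$, since these are precisely what guarantee the descent produces a smaller instance and therefore a contradiction, rather than stalling.
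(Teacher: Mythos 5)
Your proposal is correct, but it takes a genuinely different route from the paper's. After invoking Theorem~\ref{thm:quadratic} to obtain $a^2=nb^2$, you leave geometry entirely and run the classical arithmetic descent: taking a solution with $b$ minimal and supposing $a/b$ is not an integer, your new pair $b'=a-mb$, $a'=nb-ma$ (with $m=\lfloor a/b\rfloor$) does satisfy $a'^2=nb'^2$, since both sides expand to $n^2b^2-2mnab+m^2nb^2$ once $a^2=nb^2$ is substituted, and your side conditions are exactly right: $0<b'<b$ from the choice of $m$, and $a'>0$ because $ma<a^2/b=nb$. The paper instead stays geometric: it builds the room out of $a^2$ unit triangles and each carpet out of $b^2$ unit triangles (via Lemma~\ref{lem:J1}), argues by a tiling-and-leftover-strip contradiction that $b$ must divide $a$, and then counts carpets---$k$ rows holding the odd numbers $1,3,\dots,2k-1$ of carpets, doubled into a parallelogram of $2k^2$---to read off $n=k^2$ as a literal carpet count. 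What your route buys is brevity and airtightness: it needs only well-ordering and algebra, and it sidesteps the most delicate step in the paper's proof, namely the claim that $(k+\lambda)^2$ cannot be an integer when $k+\lambda$ is a rational non-integer; as written there (concluding that $x^2/y^2$ is a non-integer merely from $y\ne 1$), that claim needs $x/y$ in lowest terms together with a fact of essentially the same depth as the theorem itself, whereas your descent requires no such input. What the paper's route buys is fidelity to its ``spoon'' program: the argument remains visual, carpets-in-rooms throughout, and the final parallelogram count gives a geometric reason why $n$ is a square rather than an algebraic one. The one presentational gap in your write-up is that you assert the identity $a'^2=nb'^2$ ``collapses to a common expression'' without displaying the two expansions; in a final version you should show that three-line computation, since it is the entire engine of the descent.
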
 

\begin{proof}
Suppose $n\ge3$ is grational. Define a \textit{unit triangle} to have base length 1, height $h$, and central angle $2\pi/n$. By Lemma~\ref{lem:J1}, $a^2$ unit triangles can be arranged to make one larger similar triangle with base length $a$ and with $a$ rows. Similarly, $b^2$ unit triangles can unite to form a similar triangle with base length $b$ and with $b$ rows. 
Let us consider the tiling of $a^2$ unit triangles to be a room and each tiled union of $b^2$ unit triangles to be a carpet, as shown in Figure~\ref{fig:57.92}. By Theorem~\ref{thm:quadratic}, $n$ of these carpets have the same area as the room.

        \begin{center}
\includegraphics[width=.65\textwidth]{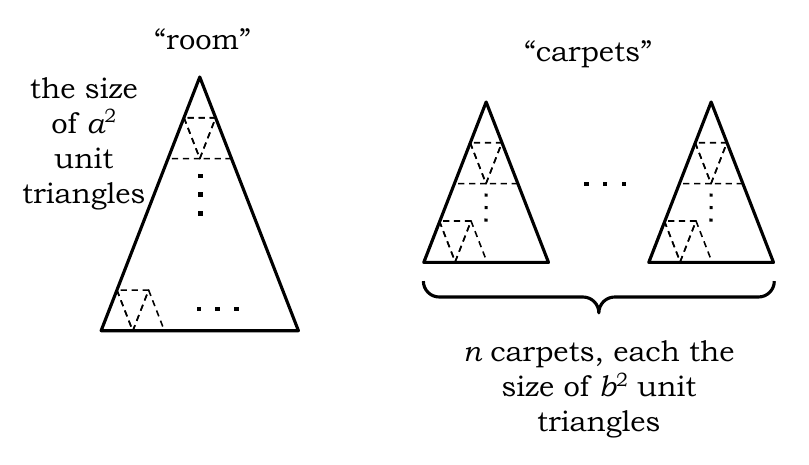}\\
      \figure The room and each carpet are the size of $a^2$ and $b^2$ unit triangles, respectively.     \label{fig:57.92}
      \end{center}

            It is possible that $n$ carpets will perfectly tile the room without requiring us to cut and rearrange any carpets. In order to affirm this, let us suppose not. Place the carpets in the room, beginning with one at the central-angle vertex. Below that, place remaining carpets, row by row in tiling formation, until there is insufficient vertical space to insert a next row. Each carpet is the union of $b$ rows of unit triangles, so each carpet would require at least $bh$ in height, regardless of whether the carpets were placed in the ``up" or ``down" orientation. In seeking a contradiction, I am assuming the carpets will not tile perfectly, row by row, in the room without dissection. Thus, the remaining uncarpeted strip would be less than $bh$ in height, as shown in Figure~\ref{fig:57.94}. 

  \begin{center}
\includegraphics[width=.53\textwidth]{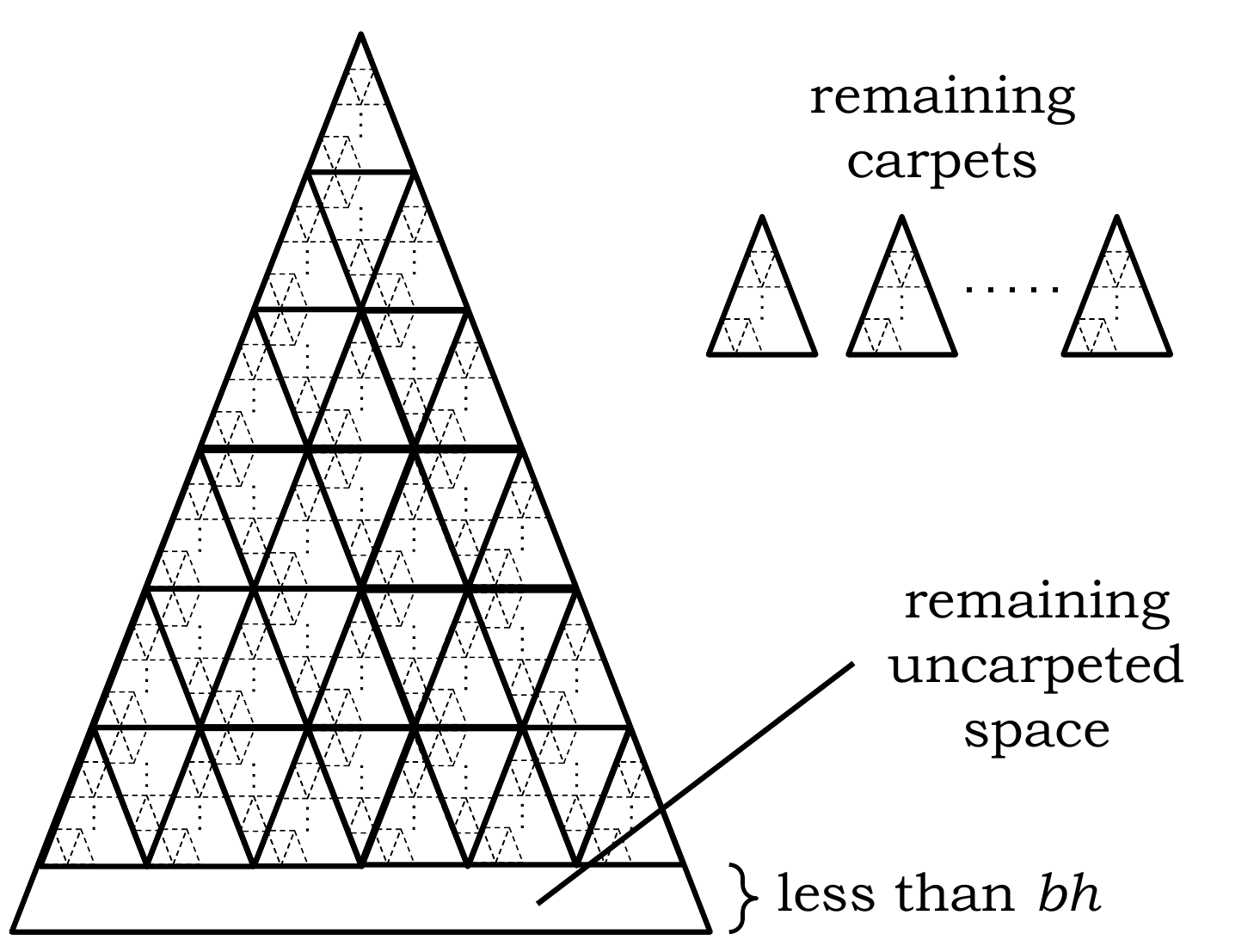}\\
      \figure The available space is less than $bh$ in height. \label{fig:57.94}
      \end{center}

      Next, decompose each of the remaining carpets into unit triangles, and insert these unit triangles into the uncarpeted region, one by one, left to right, row by row, top to bottom, as shown in Figure~\ref{fig:57.99}.

        \begin{center}
\includegraphics[width=.53\textwidth]{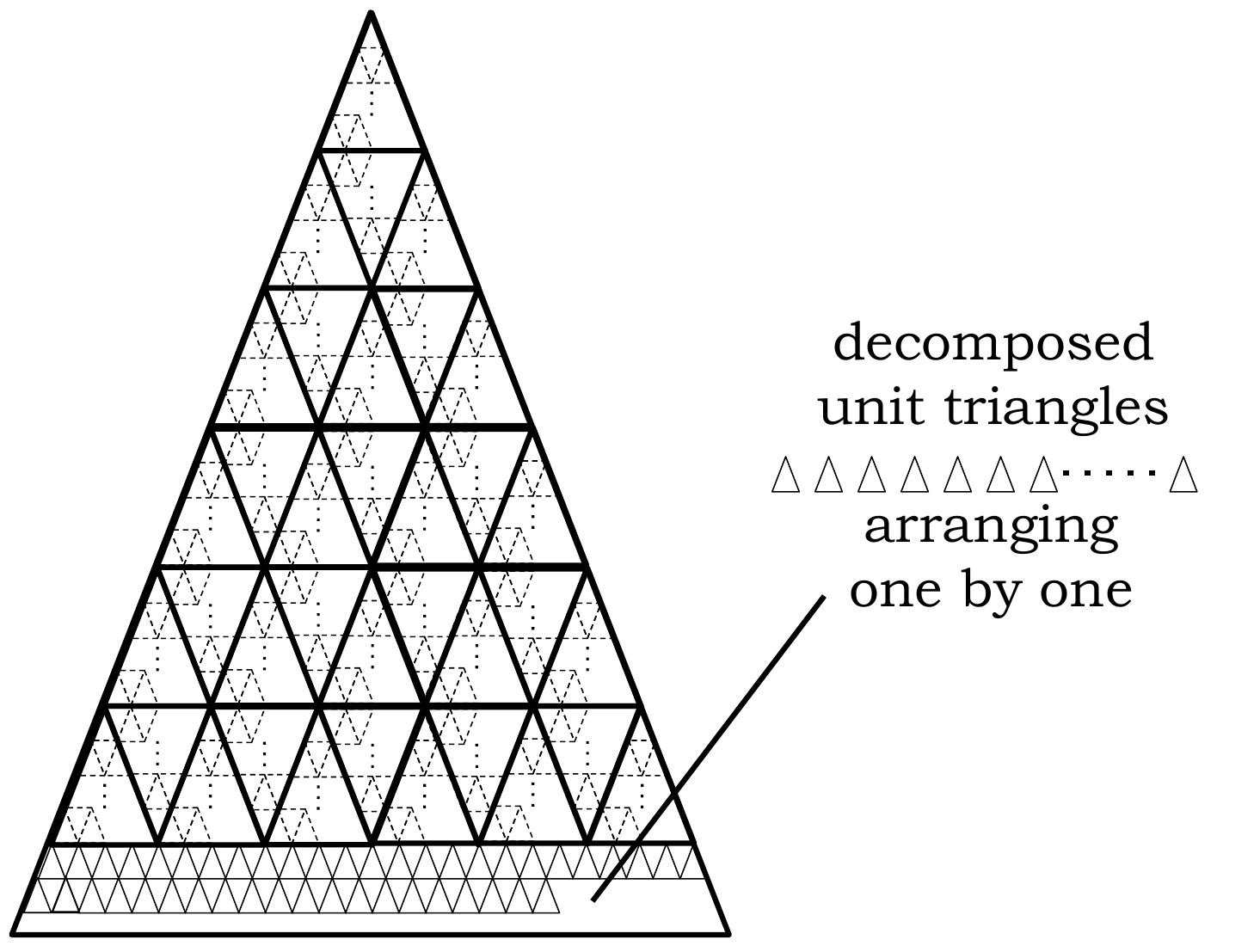}\\
      \figure The remaining carpets are decomposed into unit triangles, which are arranged into the uncarpeted space. \label{fig:57.99}
      \end{center}

By assumption, we know that there are $a^2$ unit triangles in this collection, and that $a^2$ unit triangles with base length 1 can perfectly fit into this room, in $a$ rows, where $a$ is an integer. Thus, there must be some rational number $\lambda$ such that $0<\lambda b<b$, or $0<\lambda<1$, and $\lambda b$ is a positive integer and where $\lambda b$ is the number of rows of unit triangles in the bottom section, as shown in Figure~\ref{fig:57.95}.

 \begin{center}
\includegraphics[width=.55\textwidth]{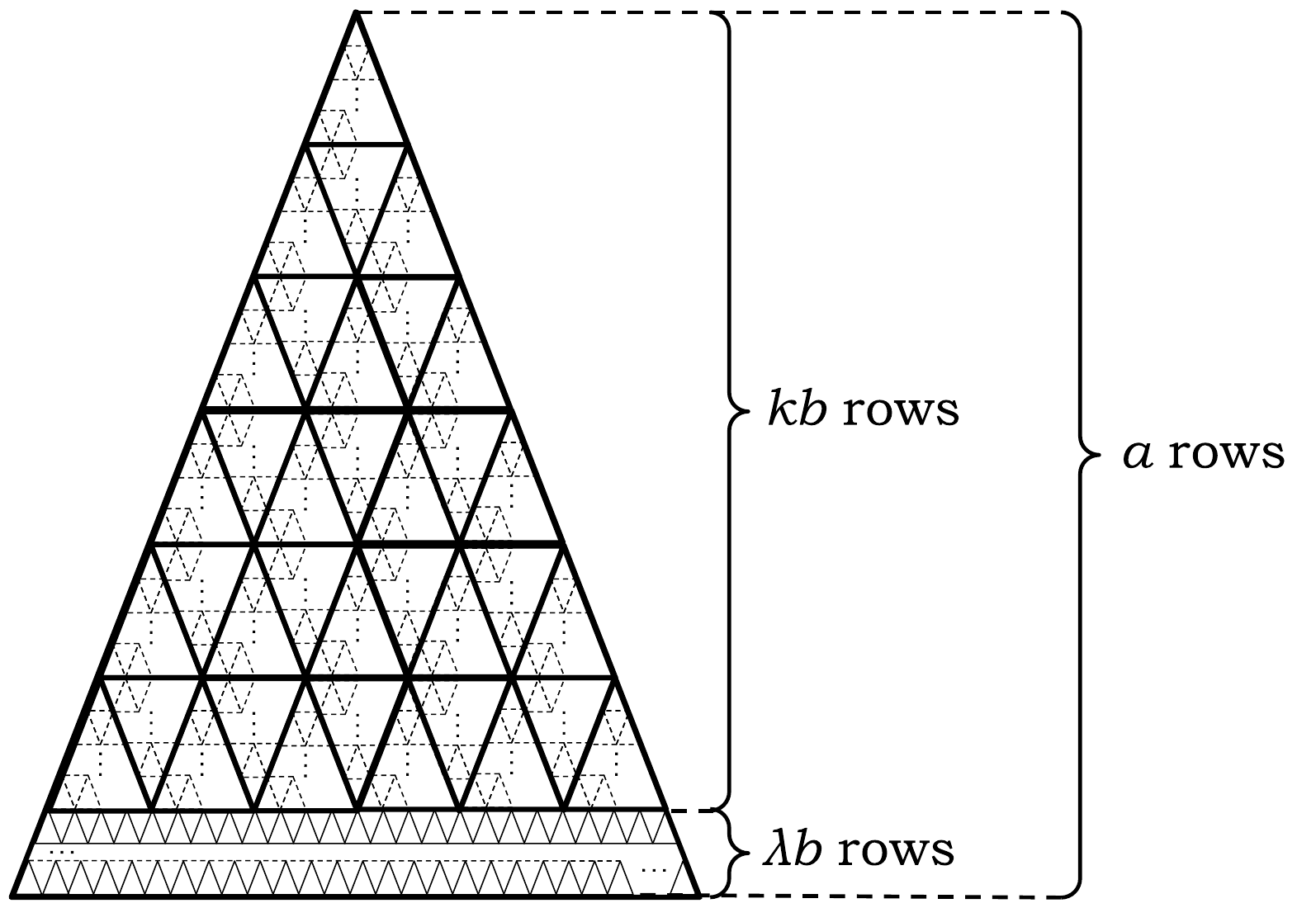}\\
      \figure $\lambda b$ rows of unit triangles would complete the tiled room. \label{fig:57.95}
      \end{center}

      Because $a=kb+\lambda b$, or $a=(k+\lambda)b$, it must also be true that $a^2=(k+\lambda)^2b^2$, or equivalently $a^2/b^2=(k+\lambda)^2$. However, by Theorem~\ref{thm:quadratic}, $a^2/b^2=n$. Therefore, $n$ must be equal to $(k+\lambda)^2$. However, $n$ is an integer by the definition of grationality, while $(k+\lambda)^2$ cannot be an integer. We know $(k+\lambda)^2$ to be a non-integer because $\lambda$, and thus $k+\lambda$, are rational non-integers. If $k+\lambda =x/y$ for positive integers $x$ and $y$, then $y\ne 1$, implying $y^2\ne 1$, making $x^2/y^2$ a rational non-integer. Therefore, it must be that $\lambda=0$, leaving no remaining floor space.

Now we are certain that with this arrangement, $n$ carpets will cover the triangular room's floor with no requirement of cutting and rearranging, given that each carpet is triangular with central angle $2\pi/n$ and base length $b$ and that the room is also such a triangle with a base length of $a$. In other words, $kb=a$ for some integer $k>1$. The number of rows of carpets will be $k$, as shown in Figure~\ref{fig:57.72}. 
 \begin{center}
\includegraphics[width=.4\textwidth]{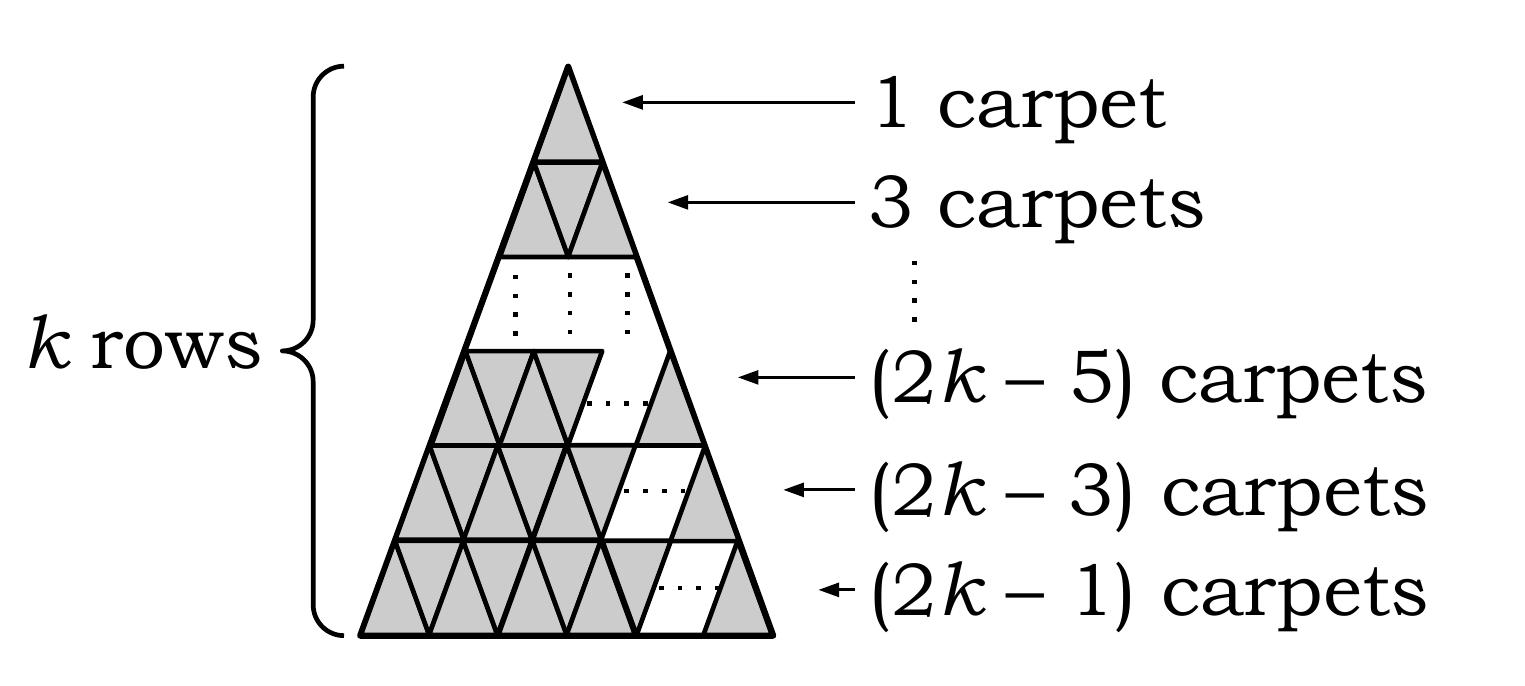}\\
      \figure The room can be tiled by $k$ rows of carpets.     \label{fig:57.72}
      \end{center}
Also, Figure~\ref{fig:57.72} shows that the number of carpets in each row is a sequence of odd numbers, from 1 to $(2k-1)$. Although well-known summation formulas can be used, I proceed with a spoon. Double this tiled room and reflect one of them so that the two triangular rooms form a parallelogram, as shown in Figure~\ref{fig:57.79}.

     \begin{center}
\includegraphics[width=.55\textwidth]{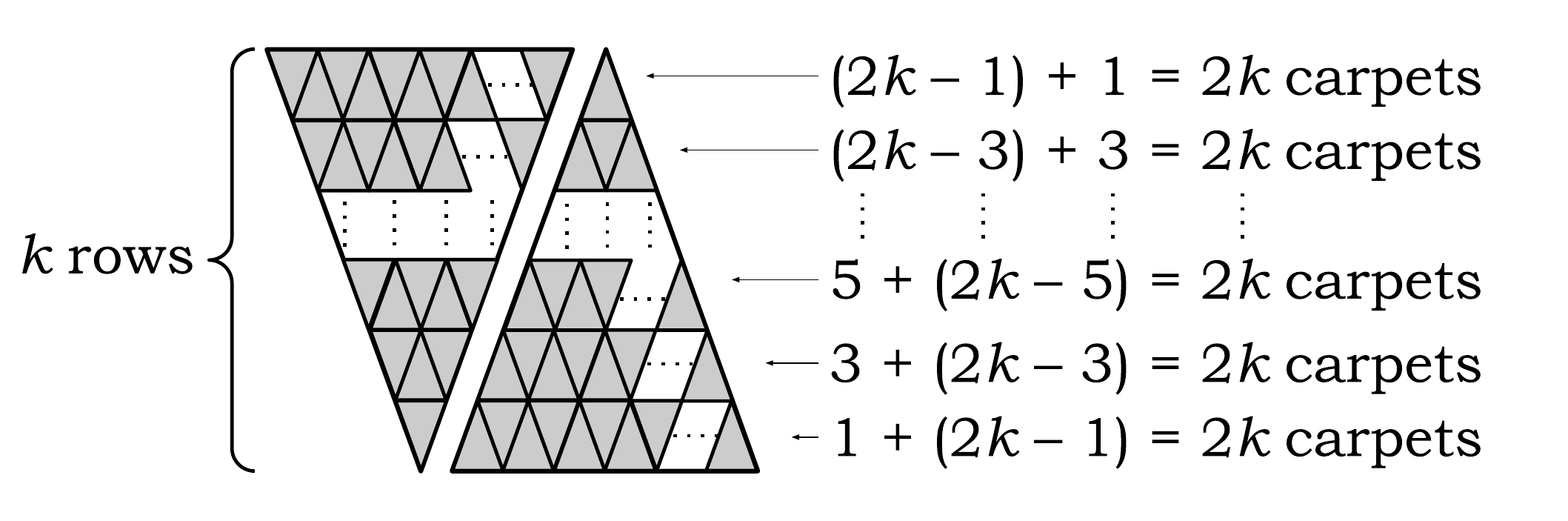}\\
      \figure Doubling the tiled room creates a parallelogram with $2k$ carpets in each row.        \label{fig:57.79}
      \end{center}

Because the parallelogram has $k$ rows of $2k$ carpets, the total number of carpet is $2k^2$, meaning the number of triangular carpets inside each of the two rooms is $k^2$. Thus, $n$ is a perfect square.
\end{proof}

\section{Conclusion}\label{conclusion}
The utensils for this article have been delimited to an assortment of spoons, such as proportions, tiling, carpets in rooms, and contradictions. I chose these restrictions, but why? 

Let me refer back to the analogous childhood story. What advantages were there to digging a swimming pool with a spoon for 3-year-old me?\begin{itemize}
    \item It allowed me to notice other cool stuff while digging in the dirt.
    \item I felt connected because I could hold my spoon so close.
    \item I felt autonomy. Since my tools were small, I had to do more work.
    \item It made a big task feel smaller.
    \item I had confidence. My father said I could do it.
    \item I felt safe. I knew he was there with a shovel if I needed him.
\end{itemize}

Turning toward mathematics, I believe there are similar advantages. For example, Plato's writings alluded to extremely simple mathematical tools; therefore, choosing to use simple tools, which rely more on logic than on machinery, allow us to connect with math history and with human intuition. Also, I believe this type of ``spoon math" to be welcoming for broader participation in mathematics because logic comes from within. If young learners can feel math resonate within their own logic and experiences, then I believe they will be less likely to give up and more likely to feel ownership and reward. 

As mathematics does not occur in a vacuum, but rather in the fabric of human nuance, I want to provide some personal details that contributed to the birth of this project. After my father's death in 2024, a new sensation of loss failed to fit into any of my intellectual boxes. Thus, I was impelled to escape into mathematics, a more comfortable world where logic is supposed to work. The funeral director had not yet seen a customer bring math proofs to a family consultation, but added, ``People grieve in their own ways," helping me realize the truth that mathematics was holding my hand. In his memory, I would like to draw one more connection. When I was 3 years old, I had begun to dig a swimming pool with a spoon. What gives a child that kind of confidence to take on a swimming pool project? It was my father's encouragement and the realization that he would back me up with heavier machinery when I needed help. We need that in math too. Be that person who propagates confidence in the abilities of others to contribute to the world of mathematics. 

\section*{Acknowledgements}
I want to acknowledge appreciation for the Mathematical Association of America. My term as Section Lecturer for the Southeastern Section motivated me to attack mathematics in the most accessible way possible, toward including more students in math engagement. As the concept of grationality was widely disseminated in talks in 2025, such as in~\cite{MAAtalk} and in multiple colloquium talks, I acknowledge (and hope) that others have already begun working on the further development of ideas around grationality. Moreover, David E. Dobbs sent me proofs of the grationality-iff-perfect-square conjecture, in which the techniques used were different from those this paper, with minimal overlap, if any. Although I viewed his proofs, the proofs in this paper were independent works. I wish him tremendous satisfaction and success in the dissemination of his work on the concept of grationality. To him and others who have also decided to take up the concept or the challenge to try spoon-math, I want to offer gratitude for engaging in this mathematical conversation. 

\bibliographystyle{abbrv}
\bibliography{references}
\bigskip

\bigskip

\end{document}